\newcommand{\R}{\mathds R}
\newcommand{\N}{\mathds N}
\newcommand{\Zd}{\mathds Z^d}
\newcommand{\E}{\mathds E}
\newcommand{\pee}{\mathds P}
\newcommand{\dd}{\mathop{}\!\mathrm{d}}
\renewcommand{\phi}{\varphi}
\newcommand{\si}{\ensuremath{\sigma}}
\newcommand{\loc}{\mathcal{L}}
\DeclareMathOperator{\1}{\mathds{1}}
\DeclareMathOperator*{\esssup}{ess\,sup}
\newtheorem{theorem}{{\small T}{\scriptsize HEOREM}}[section]
\newtheorem{proposition}{{\bf{\small P}{\scriptsize ROPOSITION}}}[section]
\newtheorem{lemma}{{\bf{\small L}{\scriptsize EMMA}}}[section]
\newtheorem{remark}{{\bf{\small R}{\scriptsize EMARK}}}[section]
\newtheorem{definition}{{\bf{\small D}{\scriptsize EFINITION}}}[section]
\newtheorem{condition}{{\bf{\small C}{\scriptsize ONDITION}}}[section]
\renewenvironment{proof}[1]
{\noindent{{\bf{\small{ P}{\scriptsize ROOF}}}.}\hspace{0.1cm} #1} {$\;\qed$\newline}
\newcommand{\sR}{\scriptscriptstyle{R}}
\newcommand{\sN}{\scriptscriptstyle{N}}
\newcommand{\caC}{{\mathscr C}}
\newcommand{\caD}{{\EuScript D}}
\newcommand{\caH}{{\mathcal H}}
\newcommand{\caL}{{\mathcal L}}
\newcommand{\caN}{{\mathcal N}}
\newcommand{\caW}{{\mathcal W}}
\DeclareMathOperator{\lip}{\mathrm{lip}}
\DeclareMathOperator{\Lip}{\mathrm{Lip}}
\newcommand{\gcb}[1]{\mathrm{GCBS}\!\left(#1\right)}
\newcommand{\gcbl}[1]{\mathrm{GCB}\!\left(#1\right)}
\newcommand{\gen}{\mathcal{L}}
\newcommand{\op}{L}
\newcommand{\Azero}{\caC_c^\infty(\R^d,\R)}
\newcommand{\diffop}{\widehat\caL}
\newcommand{\fifi}{\Phi(t,x,\epsilon)}
\DeclareMathOperator{\e}{\mathrm{e}}
\definecolor{unbleu}{rgb}{0.03, 0.15, 0.4}
\begin{document}

\title{Evolution of Gaussian Concentration bounds\\ under diffusions}

\author[1]{J.-R. Chazottes
\thanks{Email: \texttt{jeanrene@cpht.polytechnique.fr}}}

\author[1]{P. Collet
\thanks{Email: \texttt{collet@cpht.polytechnique.fr}}}

\author[2]{F. Redig
\thanks{Email: \texttt{f.h.j.redig@tudelft.nl}}}

\affil[1]{{\small CPHT, CNRS, Ecole polytechnique, IP Paris, F-91128 Palaiseau Cedex, France}}

\affil[2]{{\small Delft Institute of Applied Mathematics, Delft University of Technology, Mekelweg 4, 2628 CD
Delft, The Netherlands}}

\maketitle

\begin{abstract}
We consider the behavior of the Gaussian concentration bound (GCB) under stochastic time evolution.
More precisely, we consider a Markovian diffusion process on $\R^d$ and start the process from an initial distribution $\mu$ that satisfies GCB. We then study the question
whether GCB is preserved under the time-evolution, and if yes, how the constant behaves as a function of time.
In particular, if for the constant we obtain a uniform bound, then we can also conclude properties of the stationary measure(s) of the diffusion process. This question, as well as the 
methodology developed in the paper allows to prove Gaussian concentration via  semigroup interpolation method, for measures which are not available in explicit form.

We provide examples of conservation of GCB, loss of GCB in finite time, and loss of GCB at infinity.
We also consider diffusions ``coming down from infinity'' for which we show that, from any starting measure, at positive times, GCB holds.
Finally we consider a simple class of non-Markovian diffusion processes with drift of Ornstein-Uhlenbeck type, and general bounded predictable variance.

\medskip

\noindent{\scriptsize \textbf{Key-words:} Markov diffusions, Ornstein-Uhlenbeck process, nonlinear semigroup, coupling,
Bakry-Emery criterion, non-reversible diffusions, diffusions coming down from infinity, Ginzburg-Landau diffusions, non-Markovian
diffusions, Lorenz attractor with noise, Burkholder inequality.\newline
\noindent\textbf{MSC numbers:} 39B62, 60J60, 47D07.}
\end{abstract}

\newpage

\tableofcontents

\newpage

\section{Introduction}

{\em Some generalities.}
Concentration inequalities are a well-studied subject in probability and statistics, and are very useful in the study of fluctuations of
possibly complicated and indirectly defined functions of random variables, such as the Kantorovich distance between the empirical
distribution and the true distribution, and various properties of random graphs. See for instance \cite{blm,ledoux} and references therein.
Initially mostly studied in the context of independent random variables, many efforts have been done to extend concentration
inequalities to the context of dependent random variables, and more generally dependent random fields. For instance, in the context of
models of statistical mechanics, where the dependence is naturally encoded in the interaction potential, it is proved in \cite{ku} that
a Gaussian concentration bound holds under the Dobrushin uniqueness condition, so in particular for finite-range interaction potentials at ``high temperature''.
An example where Gaussian concentration fails is the Ising model at sufficiently low temperature where a weaker bound holds \cite{cckr}.
We refer to \cite{ccr} for various applications of these bounds.

In this paper we are interested in the behavior (preservation, loss, recovery) of concentration inequalities under stochastic time-evolution.
To our knowledge this natural question of time evolution of concentration has not been addressed directly anywhere in the literature. 
There are however several motivations to be interested in this problem.
First, in the context of non-equilibrium systems, non-equilibrium stationary states and transient non-equilibrium states are usually characterized rather implicitly via an
underlying dynamics. If we are interested in concentration properties of such measures, we are naturally led to consider time-evolved measures, and their concentration properties.

It is also used in various contexts that a Markovian semigroup interpolates between different measures \cite{bgl}, \cite[Section 2.3]
{ledoux}, and therefore it is of interest whether this interpolation conserves concentration properties. Notice that in the context of
Gibbs measures, stochastic time-evolution (even high-temperature dynamics) can destroy the Gibbs property \cite{efhr}, therefore it
is interesting to understand whether such measures -- though not Gibbs -- still enjoy concentration properties, or whether there can be
phase transitions in the concentration behavior of a measure, {\em e.g.}, from a GCB to a weaker concentration
bound in a dynamics leading from high to low-temperature regime.

In parallel to the present work, the behavior of concentration inequalities under spin-flip dynamics of configurations in $\{-1,1\}^{\Zd}$ was studied in \cite{ccr-flip}.
For ``weakly interacting'' dynamics we showed that the Gaussian concentration bound is preserved as time passses, and it is satisfied by the unique stationary Gibbs measure.
We also showed that, for a general class of translation-invariant spin-flip dynamics, it is not possible to go in finite time from a low-temperature Gibbs state to a measure satisfying
the Gaussian concentration bound. 

{\em Gaussian concentration bounds and Markovian diffusions processes.}
In the present paper, we focus on Markovian diffusion processes in $\R^d$. Suppose that a probability measure $\mu_0$ satifies a Gaussian concentration
bound (hereinafter abbreviated as GCB). This means that there exists $D_0\geq 0$ such that
\[
\mu_0\left( \e^{f-\mu_0(f)}\right)\leq \e^{D_0\lip(f)^2}
\]
for all Lipschitz functions $f:\R^d\to\R$ (with respect to Euclidean distance).  Now we ask the following question: does the evolved probability measure $\mu_t$ at time $t>0$ satisfy
$\gcbl{D_t}$ for some $D_t$?
To be more concrete, let us first look at a very simple example, namely the one-dimensional Ornstein-Uhlenbeck process, {\em i.e.},
the process $(X_t)_{t\geq 0}$ solving the stochastic differential equation
\begin{equation}\label{ou}
\dd X_t =  -\kappa X_t \dd t + \sigma \dd W_t
\end{equation}
where $\sigma,\kappa>0$, and $(W_t)_{t\geq 0}$ is a standard Brownian motion.
Let us denote by $X^x_t$ the solution starting from $X_0=x$. Then we have
\[
X^x_t= \e^{-\kappa t} x +\sigma \int_0^t  \e^{-\kappa(t-s)} \dd W_s\,.
\]
If we start from $X_0$ which is normally distributed with expectation zero and variance $\theta^2$ (denote by $\caN(0,\theta^2)$ the corresponding distribution) then, at time $t>0$, $X_t$ is normally distributed with expectation zero and variance
\[
\si_t^2= \theta^2 \e^{-2\kappa t} +\,  \frac{\sigma^2}{2\kappa}\big(1-\e^{-2\kappa t}\big)\,.
\]
Because the normal distribution $\caN(0,a^2)$ satisfies $\gcbl{D}$ with $D=a^2/2$ we conclude that for this example, with $\mu_{0}= \caN(0,\theta^2)$, $\mu_t$
satisfies $\gcbl{D_t}$ with
\[
D_t= D_\infty + (D-D_\infty) \e^{-2\kappa t}
\]
with $D_\infty= \si^2/(2\kappa)$.
Hence, $\mu_t$ satisfies $\gcbl{ D_t}$ with a constant $D_t$ interpolating smoothly between the
initial constant $D=D_0$ and the constant $D_\infty$ associated to the stationary normal distribution.

In case $\kappa=0$ the process is $\si W_t$, and we find
\[
\si^2_t= \theta^2 +\si^2 t
\]
which implies that the constant of the Gaussian concentration bound evolves as
\[
D_t= D_0 + \si^2 t.
\]
In particular, $D_t\to\infty$ as $t\to\infty$. 
We now come back to a general Markovian diffusion process $(X_t)_{t\geq 0}$ on $\R^d$ which solves a stochastic differential equation of the form
\[
\dd X_t=b(t,X_t)\dd t+\sigma(t,X_t)\dd W_{t}
\]
where $b$ and $\sigma$ satisfy standard assumptions detailed later on.
Given an initial probability measure $\mu_0$ satisfying GCB,
does $\mu_t$, the evolved probability measure, satisfy GCB for some constant $D_t$ at time $t$? Can we approximate $D_t$? What happens when $t\to\infty$? If $\mu_t$ converges to a stationary probability 
measure, does it satisfy GCB? It is possible that $D_t$ blows up in finite time?
The goal of this paper is to give an answer to these questions and some other ones formulated later on.

{\em Outline of the paper.}
In Section \ref{sec:setting}, we define what we mean by a Gaussian concentration bound for a probability measure on $\R^d$ for two classes of functions, namely, Lipschitz functions, and smooth compactly
supported functions. Indeed, we will need to work with smooth compactly supported functions at some places, and we prove (in a more general context than $\R^d$) that having GCB for this restricted class of
functions enforces GCB for Lipschitz functions. In this section, we also establish a result of independent interest (for a general metric space) providing an equivalence beetween GCB for Lipschitz functions
and a distance Gaussian moment bound (Theorem \ref{thmgaussianexpmoment}). Finally, we state the precise assumptions on the stochastic differential equations we will work with.

Section \ref{sec:general-results-propagation-GCB} contains our first general result on propagation of Gaussian concentration for finite time. We show through examples that,
besides the case of GCB for all times with bounded constant as was already illustrated by the Ornstein-Uhlenbeck process, various scenarios can occur:  No GCB at any $t>0$; or GCB for all finite 
times but with a constant $D_t $ going to infinity when $t\to\infty$; or blowing up of $D_t$ in finite time.

Once we have the general result of Section \ref{sec:general-results-propagation-GCB}, it is natural to ask about estimating $D_t$ when it exists for all $t$, or to ask whether or not the stationary measure (if it exists) satisfies
GCB for a constant $D$ which is the limit as $t\to\infty$ of $D_t$, especially when one has no information beyond its mere existence. This is the purpose of Section \ref{sec:nonlinearsemigroup}.
Because we need to estimate exponential moments of a time-evolved probability measure, as we will see later on in more detail,  an object popping up naturally is the so-called nonlinear semigroup
$V_t(f)=\log S_t \big(\e^f\big)$ where $S_t$ is the Markov semigroup of the process under consideration, as well as its associated
nonlinear generator $\caH(f)=\e^{-f} \caL\big(\e^f\big)$ where $\caL$ is the Markov generator. It is then crucial to obtain estimates for the time-dependent Lipschitz constant of $V_t f$,
which, because we can restrict to smooth $f$, as mentioned above, boils down to gradient estimates. 
For Markovian diffusion processes,  the nonlinear generator $\caH$ is a sum of a linear and a quadratic part, where the quadratic part coincides with the ``carr\'e du champ'' operator. This implies that, in the {\em reversible setting},
one can use general results on strong gradient bounds from \cite{bgl}. This is done in Section \ref{subsec:agba}.
For the {\em non-reversible setting}, we follow a second approach, based on coupling, which is pursued in Section \ref{sec:coupling}. We give examples from non-equilibrium steady states, and non-gradient perturbations of reversible diffusions.

The two approaches of Section \ref{sec:nonlinearsemigroup} are mainly providing complementary sufficient conditions for preservation of GCB in the course of time as well as for the (unique) stationary measure, in a context of diffusion processes where the drift and the diffusion matrix do not explicitly depend on time.
A third approach, based on estimating the Gaussian moment of the distance to the origin, already used in Section \ref{sec:general-results-propagation-GCB}, provides a general result of conservation of GCB in a context where explicit time 
dependence of the drift and the diffusion matrix is allowed. This approach is pursued in section \ref{sec:dgma} and accompanied by examples illustrating loss of GCB in finite or infinite time, as well as diffusions coming down from infinity where 
GCB is obtained for all positive times from any initial distribution. This applies for instance to the ``noisy'' Lorenz system.

Finally, in Section \ref{sec:nonMarkov}, we consider an example of a {\em non-Markovian} diffusion of Ornstein-Uhlenbeck type with general predictable bounded variance, where we use a method of estimating moments via Burkholder 
inequalities. 

\medskip

{\em Acknowledgements}. Very careful reading of the original manuscript by a referee helped us to make significant corrections and improvements, and 
lead us to obtain more results.

\section{Setting and main questions}\label{sec:setting}

\subsection{Gaussian concentration bounds: definitions}

We denote by $\caC_b(\R^d,\R)$ the space of bounded continuous functions from $\R^d$ to $\R$.
For a probability measure $\mu$ on (the Borel $\si$-field of) $\R^d$ and $f\in \caC_b(\R^d,\R)$, we denote
by $\mu(f)=\int f \dd \mu$ the expectation of $f$ with respect to $\mu$. $\Lip(\R^d,\R)$ denotes the set of real-valued Lipschitz
functions.
We further denote for $f\in \Lip(\R^d,\R)$
\[
\lip(f):=\sup_{\substack{x,y\,\in\,\R^d\\ x\neq y}} \frac{|f(x)-f(y)|}{\|x-y\|}
\]
the Lipschitz constant of $f$, where $\|\!\cdot\!\|$ denotes the Euclidean norm in $\R^d$.
A Lipschitz function is almost surely differentiable by Rademacher's theorem \cite[p. 101]{mattila}, and
the supremum norm of the gradient coincides with the Lipschitz constant.
For $f:\R^d\to\R$ we denote by $\nabla f$ the gradient of $f$, which we view as a column vector.
We denote
\[
\|\nabla f\|_\infty:= \esssup_{x\in\R^d}\|\nabla f(x)\|\,.
\]

We can now define the notion of Gaussian concentration bound for two classes of functions.

\begin{definition}[Gaussian concentration bounds]
\leavevmode\\
\label{def-gcb}
Let $\mu$ be a Borel probability measure on $\R^d$.
\begin{itemize}
\item[\textup{(a)}]
We say that $\mu$ satisfies the smooth Gaussian concentration bound if there exists a constant $D\geq 0$ such that
\[
\log\mu\left( \e^{f-\mu(f)}\right)\leq D\lip(f)^2
\]
for all smooth (i.e., infinitely differentiable) compactly supported $f:\R^d\to\R$. We abbreviate this property by $\gcb{D}$.
\item[\textup{(b)}] We say that $\mu$ satisfies the Gaussian concentration bound if there exists a constant $D\geq 0$ such that
\[
\log\mu\left( \e^{f-\mu(f)}\right)\leq D \lip(f)^2
\]
for all Lipschitz functions $f\in \Lip(\R^d,\R)$. We abbreviate this property by $\gcbl{D}$.
\end{itemize}
\end{definition}
A few remarks are in order.
We stress that in the above two definitions the constant $D$ is independent of the functions $f$.
In \cite[Chapter 1]{ledoux}, $\gcbl{D}$ is called ``normal concentration'', and can be defined for a Borel probability measure on a metric space. In \cite{bgl}, $\gcbl{D}$ is called a sub-Gaussian
concentration bound.
A more precise definition of a Gaussian concentration bound would consist in requiring $D$ to be the smallest constant $C\geq 0$ such that
$\log\mu\left( \e^{f-\mu(f)}\right)\leq C\lip(f)^2$ for all $f$ (either smoothly compactly supported or Lipshitz). The case $D=0$ corresponds to Dirac measures which
are the ``most concentrated'' probability measures. In the present paper Dirac measures will be sometimes taken as initial measures.

The next proposition, which we state informally, is useful in some parts of the paper when we cannot deal directly with Lipschitz functions.

\begin{proposition}\label{gcbs-gcb}
$\gcb{D}$ implies $\gcbl{D}$, with the same constant $D$.
\end{proposition}

The precise statement and and the proof are given in appendix B (Lemma \ref{un-lemme-cool}) in a more general setting (namely, separable Banach spaces).

\subsection{Equivalence between Gaussian concentration and distance Gaussian moment bounds}

We establish that what we call ``distance Gaussian moment'' is equivalent to GCB. This will used in the next section, and also in Section \ref{sec:dgma}.
This is the only section where we work with a general separable metric space $(\Omega,d)$. So we first generalize Definition \ref{def-gcb}.

\begin{definition}\label{GCB-metric-spaces}
Let $\mu$ be a probability measure on (the Borel $\sigma$-field of) $(\Omega,d)$.
We say that $\mu$ satisfies a Gaussian concentration bound with constant $D>0$ on the metric space
$(\Omega,d\,)$ if there exists $x_*\in\Omega$ such that $\int d(x_*,x) \dd\mu(x)<+\infty$  and for all $f\in\Lip(\Omega,\R)$, one has
\[
\int \e^{f-\mu(f)} \dd \mu \leq \e^{D \lip(f)^2}.
\]
For brevity we shall say that $\mu$ satisfies $\gcbl{D}$ on $(\Omega,d)$.
\end{definition}

\begin{remark}\label{rem:pomme}
\leavevmode\\
Note that if there exists $x_*\in\Omega$ such that $\int d(x_*,x) \dd\mu(x)<+\infty$ then all Lipschitz functions on $(\Omega,d)$ are $\mu$-integrable.
Moreover, by the triangle inequality, $\int d(y,x) \dd\mu(x)<+\infty$ for all $y\in\Omega$.
\end{remark}

\begin{theorem}\label{thmgaussianexpmoment}
Let $\mu$ a probability measure on $(\Omega,d)$. Then $\mu$ satisfies a Gaussian concentration bound if and only if
it has a Gaussian moment. More precisely, we have the following:
\begin{enumerate}
\item
If $\mu$ satisfies $\gcbl{D}$ then for all $x_*\in\Omega$ we have
\begin{equation}\label{leiden1}
\int \e^{\frac{d(x_*,x)^2}{16D}} \dd\mu(x) \leq 3\e^{\frac{(\int d(x,x_*)\dd\mu)^2}{8D}}.
\end{equation}
\item
If there exist $x_*\in\Omega$, $a>0$ and $b\geq 1$ such that
\begin{equation}\label{leiden2}
\int \e^{ a d(x_*,x)^2} \dd\mu(x) \leq b
\end{equation}
then $\mu$ satisfies $\gcbl{D}$ with
\begin{equation}\label{dformule}
D=\frac{b^2\e}{2a\sqrt{\pi}}.
\end{equation}
\end{enumerate}
\end{theorem}
Clearly, if \eqref{leiden2} holds for some $x_*, a, b$, then it holds for any $\tilde{x}$ with $a$ replaced by $2a$ and $b$ replaced by $b\exp\big(ad(\tilde{x},x_*)^2\big)$,
thus GCB holds with $D$ modified in an obvious way.
A very similar result can be found in \cite[Theorem 2.3]{dgw}. In Appendix \ref{GCBDGM} we provide a different proof which provides more explicit constants.
\begin{remark}
\leavevmode\\
Note that one can find a topological space, a probability on the Borel sigma-algebra, and two
distances $d_1$ and $d_2$ defining the topology such that $\mu$ satisfies GCB on the metric space with $d_1$, but it does not on the
metric space with $d_2$. For example, take $\R$, $\mu$ the Gaussian measure, $d_1$ the Euclidean distance,
and $d_2(x,y)=\left|\int_x^y (1+|s|) \dd s\right|$. Then \eqref{leiden2} is satisfied if we take $d_1$, but \eqref{leiden1} is violated if we take $d_2$ (which cannot be equivalent to
$d_1$ since $d_2$ is not induced from a norm).
\end{remark}

\subsection{A class of Markov diffusion processes on $\R^d$}\label{hypo}

We are interested in Markov diffusion processes on $\R^d$, i.e., stochastic processes $(X_t)_{t\geq 0}$ solving a stochastic differential equation (SDE, for short) of the form
\begin{equation}\label{laedso}
\dd X_t=b(t,X_t)\dd t+\sigma(t,X_t)\dd W_{t}
\end{equation}
where the functions $b$ and $\sigma$ are continuous on $\R_{+}\times \R^{d}$ (with values in $\R^{d}$ and in the set of $d\times d$ real matrices, respectively),
and $(W_t)_{t\geq 0}$ is a standard Brownian motion (or Wiener process) on $\R^d$.
Letting
\begin{equation}\label{lea}
a(t,x)=\frac{1}{2}\sigma(t,x)\, \sigma(t,x)^{\intercal}
\end{equation}
we can re-write \eqref{laedso} as
\begin{equation}\label{diffupro}
\dd X_t= b(t,X_t) \dd t + \sqrt{2\,a(t,X_t)} \dd W_t
\end{equation}
where $a: \R^{+}\times\R^d\to M^+_d$ (where $M^+_d$ denotes the set of $d\times d$ symmetric positive definite matrices with entries in $\R^d$, and the symbol $^{\intercal}$ means the transpose).

Throughout this paper we will always assume (unless explicitly stated otherwise) that the hypotheses  of the Corollary of Theorem 2.2 in \cite{Narita} are satisfied. For the convenience of the reader and for later 
references we recall these hypotheses.
\begin{itemize}
\item[\textup{(H1)}]
For any $T>0$ and $R>0$ there exists $C_{T\!,R}>0$
depending only on $T$ and $R$ such that, for any $0\le t\le T$, $\|x\|\le R$, and $\|y\|\le R$, we have

\[
\|b(t,x)-b(t,y)\|+\|\sigma(t,x)-\sigma(t,y)\|\le C_{T\!,R}\;\|x-y\|
\]
where $\|\!\cdot\!\|$ denotes either the Hilbert-Schmidt norm on matrices, or the Euclidean norm on vectors.
\item[\textup{(H2)}]
There exist two nonnegative continuous functions $\alpha$ and $\beta$ on $\R_{+}$, where $\beta$ is monotone
increasing and concave, and satisfies
\[
\int_{0}^{\infty}\frac{\dd u}{1+\beta(u)}=\infty.
\]
Moreover, for any $t\in\R_{+}$ and any $x\in\R^{d}$, we have
\[
2\,\langle x\,,\, b(t,x)\rangle+\|\sigma(t,x)\|^{2}\le
\alpha(t)\;\beta\big(\|x\|^{2}\big)
\]
where $\langle \cdot , \cdot\rangle$ denotes Euclidean inner product.
\end{itemize}

Under these conditions it follows from the Corollary of Theorem 2.2 in \cite{Narita} that, for any given initial condition in $\R^{d}$, there
exists a pathwise unique solution defined for all times  of the SDE \eqref{laedso}.

In the cases we will consider, except in Section \ref{sec:nonMarkov}, these hypotheses will  apply with
the function $\beta(u)=1+u$ (see also \cite{BL} Hypothesis 2.5.1, and \cite{khasminskii}).

In case  $a$ (or $\sigma$) and $b$ do not depend on time, we denote by $\gen$ the generator of the process $(X_t)_{t\geq 0}$ solving the SDE \eqref{diffupro},
and by $\caD(\loc)$ its domain. It is well-known that this domain contains $\caC^2(\R^d,\R)$ functions which are constant
outside a compact subset (see for instance \cite{BL}), and the action of the generator on these functions coincides with the action of the partial-differential operator $L$ defined by
\begin{equation}\label{gendif}
L= \sum_{i=1}^d b_i (x)\,\partial_i  + \sum_{i,j=1}^d a_{ij}(x)\,\partial_i\partial_j
\end{equation}
where $\partial_i$ denotes partial derivative w.r.t. $x_i$ (and where $a(x)=\sigma(x) \sigma(x)^{\intercal}/2$).
When $a$ and $b$ depend on time, we still denote by $L$ the partial-differential operator
\[
L= \sum_{i=1}^d b_i (t,x)\,\partial_i  + \sum_{i,j=1}^d a_{ij}(t,x)\,\partial_i\partial_j .
\]
where we have suppressed the dependence on $t$ of $L$, in order not to overload notation.

Now we can formulate the main three questions which we focus on in this paper: 
\begin{itemize}
\item[\textup{(Q1)}] 
If $\mu_{0}$ is probability measure which satisfies $\gcbl{D_{0}}$,
does the distribution  $\mu_s$ at some time $s>0$ of the process
$(X_t)_{t\geq 0}$, starting according 
to $\mu_{0}$, satisfy $\gcbl{D_s}$ for some $D_s$? 
This question will be settled (under some hypotheses) in Theorem \ref{propacon} and the examples following this theorem show that various possibilities can occur. 
\item[\textup{(Q2)}] 
The main question after Theorem \ref{propacon} is whether $D_{s}$ exists globally (i.e., is finite for all $s>0$).
\item[\textup{(Q3)}] 
If $D_{s}$ exists globally, is it bounded?
In that case, does the stationary measure (or stationary measures) of
$(X_t)_{t\geq 0}$ satisfy $\gcbl{D}$ for some constant $D$? 
Can one estimate $D$?
\end{itemize}

\section{Propagation of Gaussian concentration}\label{sec:general-results-propagation-GCB}

\subsection{A general result}

The following theorem settles question (Q1) formulated above.

\begin{theorem}\label{propacon}
Let $b$ and $\sigma$ satisfy the hypotheses of Section \ref{hypo}. Assume  moreover that there exists $T\in\left]0,+\infty\right]$ \textup{(}hence $T$ can be infinite\textup{)} and three continuous (nonnegative) 
functions $m$, $\alpha$ and $\beta$  on $[0,T)$ such that, for any $0\le t<T$, we have
\begin{enumerate}
\item 
$\sup_{x\in\R^{d}}\|\sigma(t,\,x)\|\le m(t)$,
\item
$\langle x, b(t,x)\rangle\le \alpha(t) \, \|x\|+\beta(t)\, \|x\|^{2},\, \forall x\in\R^{d}$.
\end{enumerate}
Then  if $\gcbl{D}$ holds for the initial distribution $\mu_{0}$, then it holds for $\mu_t$
for any $0\le t<T$ with $D_t<\infty$ (possibly diverging as $t\uparrow T$).
\end{theorem}

\begin{proof}
We start by defining a function $c:\R_+\to\R$ as the solution of the differential equation
\[
\dot{c}(t)=\frac{\dd c}{\dd t}=-2 \,c(t)\,\big(\beta(t)+\alpha(t)+d\,m(t)^{2}\big)-4\,c(t)^{2}\,m(t)^{2}
\]
starting with $c(0)=c_{0}>0$. This solution is decreasing in $t$ and remains strictly positive for $t<T$. Indeed, if for some $T>t_{0}>0$, $1\ge c\big(t_{0}\big)>0$,
we have $c(t)\le1$ for any $T>t\ge t_{0}$, hence
\[
\frac{\dd c}{\dd t}\ge -2 \,c(t)\,\big(\beta(t)+\alpha(t)+(d+2)\,m(t)^{2}\big)
\]
which implies for any $T>t\ge t_{0}$
\[
c(t)\ge c\big(t_{0}\big) \;\e^{-2 \mathlarger{\int}_{t_{0}}^{t}\big(\beta(s)+\alpha(s)+(d+2)\,m(s)^{2}\big)\dd s}>0\;.
\]
Now, let $\epsilon>0$ and define
\[
\fifi=\exp\left(\frac{c(t)\|x\|^{2}}{\tau(x)}\right)
\]
where we set $\tau(x):=1+\epsilon\,\|x\|^{2}$ to alleviate notation.
The function $\Phi$ is in $\mathscr{C}^{0}_{b}([0,T)\times \R^{d})$ with bounded first and second differentials in $x$.
For $0\le t<T$, we have
\begin{align*}
& \partial_{t}\fifi+\op\fifi=
\frac{ \dot{c}(t)\|x\|^{2}}{\tau(x)}\,\fifi
+\frac{2\,c(t)\langle x,b(x,t)\rangle}{\tau(x)^{2}}\;\fifi \\
& \;+\sum_{i,\,j=1}^da_{i,j}(t,x)\;\fifi\;\left(\frac{4\,c(t)^2x_{i}\,x_{j}}{\tau(x)^{4}}+\frac{2\,c(t)\delta_{i,\,j}}{\tau(x)^{2}}-\frac{8\,c(t)\epsilon\, x_{i}\,x_{j}}{\tau(x)^{3}}\right).
\end{align*}
Since $\fifi>0$ and using the conditions on $b$ and $\sigma$, $c(t)\ge0$ and the fact that the matrix $a(t,x)$ is nonnegative, we get (where $L$ is defined in \eqref{gendif})
\begin{align*}
& \frac{\partial_{t}\fifi+\op\fifi}{\fifi}\\
& \le \frac{\dot{c}(t)\|x\|^{2}}{\tau(x)}+\frac{2c(t)\big(\alpha(t)\,\|x\|+\beta(t)\|x\|^{2}\big)}{\tau(x)^{2}}+\frac{4 c(t)^{2} m(t)^{2}\, \|x\|^{2}}{\tau(x)^{4}}\\
& \qquad+\frac{2c(t)d\, m(t)^{2}}{\tau(x)^{2}}\\
& \le \frac{\dot{c}(t)\|x\|^{2}}{\tau(x)}+
\frac{2c(t)\,\beta(t)\|x\|^{2}}{\tau(x)}+\frac{4c(t)^{2}\,m(t)^{2}\, \|x\|^{2}}{\tau(x)}\\
& \qquad +2\,c(t)\,\frac{\alpha(t)\,\|x\|}{\tau(x)^{2}}+2\,c(t)\,\frac{d\,m(t)^{2}}{\tau(x)^{2}}\\
&=\big(\dot{c}(t)+2\,c(t)\,\beta(t)+4\,c^{2}(t)\,m(t)^{2}\big)\;\frac{\|x\|^{2}}{\tau(x)}\\
& \qquad +2\,c(t)\left(\frac{\alpha(t)\,\|x\|}{\tau(x)^{2}}+\frac{d\,m(t)^{2}}{\tau(x)^{2}}\right).
\end{align*}
Therefore, letting $\rho(t):=2\,\big(\alpha(t)+ d\, m(t)^{2}\big)$, we have
\begin{align*}
& \frac{\partial_{t}\fifi+\op\fifi}{\fifi}\\
&\leq \big(\dot{c}(t)+2\,c(t)\,\beta(t)+4\,c(t)^{2}\,m(t)^{2}+\rho(t)\,c(t)\big)\;\frac{\|x\|^{2}}{\tau(x)}\\
&\qquad +2\,c(t)\left(\frac{\alpha(t)\,\|x\|}{\tau(x)^{2}}+\frac{d\,m(t)^{2}}{\tau(x)^{2}}-\frac{\rho(t)}{2}\, \frac{\|x\|^{2}}{\tau(x)}\right)\\
& \le \big(\dot{c}(t)+2\,c(t)\,\beta(t)+4\,c(t)^{2}\,m(t)^{2}
+\rho(t)\,c(t)\big)\;\frac{\|x\|^{2}}{\tau(x)}\\
&\qquad + \; \frac{2\,c(t)}{\tau(x)}\;\left(\alpha(t)\,\|x\|+d\, m(t)^{2}-\frac{\rho(t)}{2}\,\|x\|^{2}\right).
\end{align*}
By the very definition of $c(t)$, the first term vanishes. For the second one, we have that,
for any $x\in\R^{d}$ and any $0\le t<T$, 
\[
\frac{2\,c(t)}{\tau(x)}\;\left(\alpha(t)\,\|x\|+d\,m(t)^{2}-\frac{\rho(t)}{2}\,\|x\|^{2}\right)\fifi\le c(t)\,\rho(t) \e^{c(t)}.
\]
Indeed, consider first the case $\|x\|\geq 1$. Hence, by definition of $\rho$, the term between parentheses is negative, so the bound is trivially true.
When $\|x\|<1$, we have
\[
\alpha(t)\,\|x\|+d\,m(t)^{2}-\frac{\rho(t)}{2}\,\|x\|^{2}\leq \alpha(t)\,\|x\|+d\,m(t)^{2}\leq \rho(t)/2.
\]
Hence, using the trivial bound $\tau(x)>1$, we get the desired bound.
Therefore we established that, for $0\le t<T$,
\[
\partial_{t}\fifi+\op\fifi\le c(t)\,\rho(t) \e^{c(t)}\,.
\]
Let $x\in \R^{d}$ be fixed and denote by $B_{R}$ the ball centered in $x$ of radius $R>0$. Let $T_{B_{R}}$ denote the first exit time from
the ball $B_{R}$ of the process starting at $x$.

Since $\rho(t) \,c(t)\,\exp(c(t))>0$, we get using Dynkin's formula (see \cite[Section 7.4]{oksendal})
\begin{align*}
& \E_{x}\left[\exp\left(\frac{c\big(t\wedge T_{B\!_{\sR}}\big)\,X_{t\wedge T_{\scaleto{B\!_{\sR}}{6pt}}}^{2}}{1+\epsilon\,X_{t\wedge T_{B\!_{\sR}}}^{2}}\right)\right] \\
& \le \e^{c_{0}\frac{\|x\|^{2}}{\tau(x)}}+\,\E_{x}\left(\int_{0}^{t\wedge T_{\scaleto{B\!_{\sR}}{6pt}}} \rho(s)\,c(s)\e^{c(s)}\dd s\right)\\
& \le \e^{c_{0}\frac{\|x\|^{2}}{\tau(x)}}+\,\int_{0}^{t} \rho(s)\,c(s)\e^{c(s)}\dd s
\end{align*}
which is finite for all $0\le t<T$.
Therefore
\[
\E_{x}\left(\1_{\{T_{B\!_{\sR}}>t\}}\e^{\frac{c(t)\,X_{t}^{\scaleto{2}{3pt}}}{1+\epsilon X_{t}^{\scaleto{2}{3pt}}}}\right)
\le \e^{\frac{c_{0}\|x\|^{2}}{\tau(x)}}+\,\int_{0}^{t} \rho(s)\,c(s)\e^{c(s)}\dd s.
\]
Since
\[
\sup_{y\,\in\,\R^{d}}\e^{\frac{c(t)\|y\|^{2}}{\tau(y)}}\le \e^{\frac{c(t)}{\epsilon}}<\infty
\]
letting $R$ tend to infinity and using the dominated convergence
theorem we get
\[
\E_{x}\left(\exp\left(\frac{c(t)X_{t}^{2}}{1+\epsilon\,X_{t}^{2}}\right)\right)
\le \e^{c_{0}\frac{\,\|x\|^{2}}{\tau(x)}}+\,\int_{0}^{t} \rho(s)\,c(s)\e^{c(s)}\dd s.
\]
Since the integrant in l.h.s. is monotone decreasing in $\epsilon$ we conclude by the monotone convergence theorem that for
any $x$ and $0\le t<T$
\begin{equation}\label{13:09}
\E_{x}\left(\e^{c(t) X_{t}^{2}}\right)\le \e^{c_{0}\|x\|^{2}}+\,\int_{0}^{t} \rho(s)\,c(s)\e^{c(s)}\dd s.
\end{equation}
Now we use part 1 of Theorem \ref{thmgaussianexpmoment} with $d$ given by the Euclidean norm $\|\!\cdot\!\|$. If $\mu_{0}$ satisfies $\gcbl{D}$, take $x_*=0$ and
\[
c_{0}=\frac{1}{16\,D}  
\]
and by integrating \eqref{13:09} over $\mu_0$ we get
\[
\E_{\mu_{0}}\left(\e^{c(t)\,X_{t}^{2}}\right)
\le 3 \e^{\frac{(\int \|x\|\dd\mu_{\scaleto{0}{3pt}}(x))^{2}}{8D}}+\,\int_{0}^{t} \rho(s)\,c(s)\e^{c(s)}\dd s.
\]
Therefore, by part 2 of Theorem \ref{thmgaussianexpmoment} we know that $\mu_t$ satisfies $\gcbl{D_t}$ where
\begin{equation}\label{general-Dt}
D_t=\frac{\e} {2 \, c(t) \,\sqrt{\pi}}\left(3 \e^{\frac{(\int \|x\|\dd\mu_{\scaleto{0}{3pt}}(x))^{2}}{8D}}+\,\int_{0}^{t} \rho(s)\, c(s)\e^{c(s)}\dd s\right)^{2}
\end{equation}
which is finite for all $0\le t<T$.
\end{proof}

\subsection{Examples}

We give examples for initial distributions which are Dirac masses.
These distributions trivially satisfy Gausssian concentration (with $D=0$).

\subsubsection{No GCB at any $t>0$}

Consider the stochastic differential equation in $\R$
\[
\dd X_t=X_t \dd W_{t}\;.
\]
The hypotheses of Section \ref{hypo} are satisfied. The solution starting at $x_{0}$ at time $t=0$ is
\[
X_t=x_{0}\e^{W_t-\frac{t}{2}}\;.
\]
All exponential moments of $X_t$ are infinite, therefore $\mu_t$ cannot satisfy GCB, for any $t>0$.

\subsubsection{GCB for all finite times but $D_t\to\infty$ as $t\to\infty$}

Consider the stochastic differential equation in $\R$
\[
\dd X_t=\dd W_{t}.
\]
The hypotheses of Section \ref{hypo} are satisfied. The solution
starting at $x_{0}$ at time $t=0$ is
\[
X_t=x_{0}+W_{t}.
\]
From Theorem \ref{propacon} we know that $\gcb{D_t}$ holds for any $t>0$, with $D_t<\infty$ (which may depend on $x_{0}$).
(Of course, we can apply the second part of Theorem \ref{thmgaussianexpmoment}.)
Now, taking the function $f(x)=x$, since the law of $X_t$ is a normal law with mean $x_0$ and variance $t$, we have $D_t\geq t/2$, hence
$D_t\to\infty$ as $t\to\infty$.

\subsubsection{Blowing up of $D_t$ in finite time}

Let $\alpha(t)$ be a $C^{1}$ non decreasing function such that 
\[
\alpha(t)=
\begin{cases}
0 & \mathrm{for} \;0\le t\le 2\\
1/3 & \mathrm{for} \;t\ge 2 .\\
\end{cases}
\]
Consider the stochastic differential equation on $\R$
\[
\dd X_t=\big(1+X_t^{2}\big)^{\alpha(t)}\dd W_{t}\;.
\]
The hypotheses of Section \ref{hypo} are satisfied. Therefore for a given initial condition $x_{0}$ the solution is unique and does not
explode at any finite time. It follows from Theorem \ref{propacon} (applied with $T=1$) that Gaussian concentration holds for any $0\le t<1$.

We now prove by contradiction that  Gaussian concentration cannot hold
beyond some $t_{*}>0$. Applying stochastic calculus to the function $x\mapsto 1+x^{2}$ it follows easily that
\[
\sup_{t\ge0}\E_{x_{0}}\big(|X_t|\big)<\infty\,.
\]
Observe that if $D_t<\infty$, it follows from Theorem \ref{thmgaussianexpmoment}, part 1, that
\[
\E_{x_{0}}\left(\e^{\sqrt{1+X_{t}^{2}}}\,\right)<\infty\,.
\]
Letting $f(x)=\e^{\sqrt{1+x^{2}}}$, we have
\[
f'(x)=\frac{x}{\sqrt{1+x^{2}}}\,f\quad\text{and}\quad
f''(x)=\left(\frac{x^{2}}{1+x^{2}}+
\frac{1}{(1+x^{2})^{3/2}}\right)f\ge uf
\]
where
\[
u=\inf_{x\in\R} \left(\frac{x^{2}}{1+x^{2}}+\frac{1}{(1+x^{2})^{3/2}}\right)=\frac{23}{27}\,.
\]
Using It\^o's formula we get
\begin{align*}
\dd f(X_t)
&=\frac{X_t}{\sqrt{1+X_t^{2}}}\;f(X_t)\dd W(t)\\
& \quad +\frac{\big(1+X_t^{2}\big)^{2\alpha(t)}}{2}
\left(\frac{X_t^{2}}{1+X_t^{2}}+
\frac{1}{(1+X_t^{2})^{3/2}}\right) f(X_t)\;.
\end{align*}
Therefore
\begin{align*}
& \E_{x_{0}}\left(\e^{\sqrt{1+X_{t}^{2}}}\right)\\
& =\int_{0}^{t} \E_{x_{0}}\left[\frac{\big(1+X_s^{2}\big)^{2\alpha(s)}}{2}
\left(\frac{X_s^{2}}{1+X_s^{2}}+
\frac{1}{(1+X_s^{2})^{3/2}}\right)\;\e^{\sqrt{1+X_s^{2}}}\,\right]\dd s \\
& \ge \frac{u}{2} \int_{0}^{t} \E_{x_{0}}\left[\big(\sqrt{1+X_s^{2}}\,\big)^{4\alpha(s)}\;\e^{\sqrt{1+X_s^{2}}}\,\right]\dd s \\
& \ge \frac{u}{2} \int_{0}^{t} 
\E_{x_{0}}\left[\e^{\sqrt{1+X_t^{2}}}\,\right]
\; \left(\log\E_{x_{0}}\left[\e^{\sqrt{1+X_s^{2}}}\,\right]\right)^{4\alpha(s)}\dd s
\end{align*}
where the last inequality follows from Jensen's inequality applied to the random variable $Z(s) (\log Z(s))^{4\alpha(s)}$ with $Z(s):=\e^{\sqrt{1+X_s^2}}$, for each $s$.

Now consider the ordinary differential equation on $\R_{+}$
\[
\frac{\dd y}{\dd t}=\frac{u}{2}\;y\;\big( \log( y)\big)^{4\,\alpha(t)},\; y(0)=\e^{\sqrt{1+x^{2}}}.
\]
We have
\[
\E_{x_{0}}\left[\e^{\sqrt{1+X_t^{2}}}\,\right]\ge y(t).
\]
This solution $y(t)$ blows up in finite time. The proof is by contradiction.
A solution with $y(0)>1$ is monotically increasing. If $y(t)$ is forever finite we have for $t\ge2$
\[
\frac{\dd y}{\dd t}=\frac{u}{2}\;y\; \big( \log( y)\big)^{4\,\alpha(t)}.
\]
We get a contradiction by Osgood's criterion \cite{osgood} since for $t\geq  2$ we have
\[
\int_{y(2)}^{\infty} \frac{\dd z}{z\, \big( \log( z)\big)^{4/3} }=\frac{3}{\big( \log( y(2))\big)^{1/3}}<\infty.
\]

\section{Nonlinear semigroup, gradient bounds, and coupling} \label{sec:nonlinearsemigroup}

In some sense Theorem \ref{propacon} gives an answer to the basic questions about preservation in time of Gaussian concentration bounds, but we have little information on $D_t$, 
when it is finite (see  \eqref{general-Dt}). The goal of this section is to obtain a more explicit $D_t$, and also to possibly obtain GCB for the stationary measure, when there is one, by 
taking $t\to+\infty$.
We  restrict to time-homogeneous Markovian diffusions, i.e., the drift and diffusion matrix do not depend on time, so \eqref{laedso} takes the form
\[
\dd X_t=b(X_t)\dd t+\sigma(X_t)\dd W_{t}
\]
where otherwise the $b$ and $\sigma$ satisfy the assumptions in Section \ref{hypo}.

We develop an abstract approach based on the so-called nonlinear semigroup in the next two subsections.
Then, we first combine it with the Bakry-Emery $\Gamma\!_2$ criterion in the reversible context. 
We show that if the strong gradient bound is satisfied, then the Gaussian concentration bound is conserved
in the course of the time evolution, and in the limit $t\to+\infty$.  
Second, we develop a coupling approach to treat non-reversible degenerate situations (which are out of reach of the first approach).

Before going on, we recall that we can look for Gaussian concentration bounds for smooth compactly supported functions, which will automatically give Gaussian concentration bounds for Lipschitz functions
by virtue of Proposition \ref{gcbs-gcb}.

\subsection{The nonlinear semigroup}

Let $(X_t)_{t\geq  0}$ be a Markov diffusion process on $\R^d$ as defined in \eqref{diffupro}, with
drift and diffusion matrix not depending on time. Denote by $(S_t)_{t\geq 0}$ its semigroup acting on $\caC_b(\R^d,\R)$. As usual, the generator is denoted by
\[
\gen f(x)= \lim_{t\,\downarrow\, 0} \frac{\,S_t f(x)- f(x)}{t}
\]
on its domain $\caD(\gen)$  of functions $f$ such that $(S_t f(x)- f(x))/t$ converges uniformly in $x$ when $t\downarrow 0$.
The {\em nonlinear} semigroup is denoted by
\[
V_t(f)= \log S_t \big(\e^f\big)\,.
\]
This is indeed a semigroup since
\[
V_{t+s}(f)= \log \big(S_{t+s} \big(\e^f\big)\big)= \log S_t \big(S_s\big(\e^f\big)\big)=\log S_t \big(\log \e^{V_s (f)}\big)= V_t (V_s (f))\,.
\]
We denote by $\caH$ its generator, {\em i.e.}, for all $x\in\R^d$,
\begin{equation}\label{hdef}
\caH(f)(x)= \lim_{t\,\downarrow\, 0} \frac{V_t (f)(x)- f(x)}{t}
\end{equation}
defined on the domain $\caD(\caH)$ where the defining limit in \eqref{hdef} converges uniformly. The relation between $\caH$ and $V_t$ is more
subtle than  the relation between $\caL$ and $S_t$.
We will restrict ourselves to the case of diffusions on $\R^d$, although what follows can be formulated in a
more abstract setting. Thanks to the approximation results found in Appendix \ref{appendiceB}, it is enough to restrict ourselves to
adequate subsets of the domains $\caD(\caL)$ and $\caD(\caH)$. Denote by $\caC_c^\infty(\R^d,\R)$ the space of infinitely
differentiable real-valued functions on $\R^d$ with compact support.

\begin{proposition}\label{celledavant}
The following properties hold:
\begin{enumerate}
\item
$\caC_c^\infty(\R^d,\R)\subset \caD(\caL)$ and constant functions also
belong to $\caD(\caL)$; 
\item
$\caC_c^\infty(\R^d,\R)\subset \caD(\caH)$, and for $f\in
\caC_c^\infty(\R^d,\R)$ we have 
\[
\caH(f)=\e^{-f} \caL \e^{f};
\]
\item
$\forall f \in \caC_c^\infty(\R^d,\R)$, $V_t(f)\in \caD(\caH)$ for each $t\geq 0$, and we have
\[
\frac{\dd V_t (f)}{\dd t}= \caH( V_t (f))\,.
\]
\end{enumerate}
\end{proposition}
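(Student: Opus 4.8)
The plan is to establish the five items in turn, each reducing to the previous ones plus one analytic input. Write $\caL g=b\cdot\nabla g+\mathrm{tr}(a\,\nabla^2 g)$ for the diffusion generator, and for $f\in\caC_c^\infty(\R^d,\R)$ set $u(t,\cdot)=S_t(\e^f)$, so that $V_t(f)=\log u(t,\cdot)$; since $S_t\mathbf 1=\mathbf 1$, the function $u$ takes values in a compact subinterval $[m,M]\subset(0,\infty)$. For (1): as $f$ is smooth with compact support, $\caL f$ is continuous with compact support, hence bounded, and Dynkin's formula gives $t^{-1}(S_tf(x)-f(x))-\caL f(x)=\E_x\big[t^{-1}\!\int_0^t\big(\caL f(X_s)-\caL f(x)\big)\,\dd s\big]$; on a fixed compact neighbourhood of $\mathrm{supp}\,f$ this tends to $0$ uniformly by path-regularity of the diffusion, and for $x$ away from that neighbourhood it is at most $\|\caL f\|_\infty\sup_{s\le t}\pee_x(X_s\in\mathrm{supp}\,f)$, uniformly small for small $t$; hence $f\in\caD(\caL)$. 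For (2): applying (1) to $\e^f-1\in\caC_c^\infty(\R^d,\R)$ and using $S_t\mathbf 1=\mathbf 1$ gives $u(t,\cdot)-\e^f=S_t(\e^f-1)-(\e^f-1)=t\,g_t$ with $g_t\to\caL\e^f$ uniformly as $t\downarrow0$; then $V_t(f)-f=\log(1+t\,\e^{-f}g_t)$, and since $\e^{-f}g_t$ is uniformly bounded for small $t$ and $\log(1+\xi)=\xi+O(\xi^2)$, we obtain $t^{-1}(V_t(f)-f)\to\e^{-f}\caL\e^f$ uniformly, i.e.\ $f\in\caD(\caH)$ and $\caH(f)=\e^{-f}\caL\e^f$.

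Item (3) is the substantial step. By the regularizing properties of the diffusion semigroup --- here one uses that $b,a$ are ``as regular as needed'' and $a$ is locally uniformly elliptic, so that $u$ solves $\partial_t u=\caL u$ with smooth bounded datum $\e^f$ --- the map $x\mapsto u(t,x)$ is smooth with bounded first and second spatial derivatives for each fixed $t>0$; moreover, because $\e^f\equiv 1$ outside a compact set, $\nabla u(t,\cdot)$ and $\nabla^2 u(t,\cdot)$ decay at infinity, fast enough to absorb any growth of $b$ and $a$. As $\log$ is smooth on $[m,M]$, the identities $\nabla V_t(f)=u^{-1}\nabla u$ and $\nabla^2 V_t(f)=u^{-1}\nabla^2 u-u^{-2}\,\nabla u\otimes\nabla u$ show that $V_t(f)$ has the same properties, so $\caL V_t(f)$ is bounded and continuous. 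Running the argument of (1) with $\caL f$ replaced by the bounded continuous function $\caL V_t(f)$ then yields $V_t(f)\in\caD(\caL)$.

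Items (4) and (5) follow formally. For (4), by (3) and the invariance $S_t\caD(\caL)\subset\caD(\caL)$ we have $\e^{V_t(f)}-1=S_t(\e^f-1)\in\caD(\caL)$ with $\e^{V_t(f)}=u(t,\cdot)\in[m,M]$, so replaying the computation of (2) with $f$ replaced by $V_t(f)$ gives $V_t(f)\in\caD(\caH)$ and $\caH(V_t(f))=\e^{-V_t(f)}\caL\e^{V_t(f)}$. For (5), $\tfrac{\dd}{\dd t}S_tg=\caL S_tg$ for $g\in\caD(\caL)$, applied with $g=\e^f-1$, gives $\tfrac{\dd}{\dd t}u(t,\cdot)=\caL u(t,\cdot)$ in sup-norm; since $t\mapsto u(t,\cdot)$ is differentiable with values in $[m,M]$ and $\log\in C^1([m,M])$, the chain rule gives $\tfrac{\dd}{\dd t}V_t(f)=u(t,\cdot)^{-1}\caL u(t,\cdot)=\e^{-V_t(f)}\caL\e^{V_t(f)}$, which equals $\caH(V_t(f))$ by (4).

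The main obstacle is item (3): items (1), (2), (4) and (5) are essentially bookkeeping with Dynkin's formula, the semigroup property and the Taylor expansion of $\log$, whereas (3) requires genuine regularity theory for the diffusion --- smoothness of $u(t,\cdot)$ together with the spatial decay of its derivatives inherited from the compact support of $f$ --- precisely so that $\caL V_t(f)=\caL(\log u)$ remains bounded even when the drift $b$ is unbounded, as is already the case for the Ornstein--Uhlenbeck example of Section~2.3. Thanks to the approximation results of Appendix~\ref{appendiceB}, restricting to $f\in\caC_c^\infty(\R^d,\R)$ rather than working on the full domains is no loss of generality.
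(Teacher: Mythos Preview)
Your approach is essentially the same as the paper's: both reduce (2) to (1) via $\e^f-1\in\caC_c^\infty$ together with the bounds $\e^{-\|f\|_\infty}\le S_t\e^f\le\e^{\|f\|_\infty}$ and a Taylor expansion of $\log$; both identify (3) as the substantive step and defer it to the regularity theory of the diffusion semigroup (the paper simply cites \cite{bgl} for the fact that $V_t(f)$ is bounded, vanishes at infinity, and is $C^2$ with bounded derivatives); and both obtain (4)--(5) from the semigroup property together with $S_t(\e^f)\in\caD(\caL)$. Your treatment is more explicit than the paper's throughout, and your diagnosis that (3) carries the real analytic content is exactly right.

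One small point of precision: your argument for (1) uses the compact support of $\caL f$ in an essential way (the ``$x$ away from the neighbourhood'' part), so the phrase ``running the argument of (1)'' does not literally apply in (3), where $\caL V_t(f)$ merely decays at infinity. In the Ornstein--Uhlenbeck case $b(x)=-\kappa x$, for instance, $\E_x[\|X_s-x\|]$ is of order $s|x|$ and hence not uniformly small in $x$, so the uniform convergence in the definition of $\caD(\caL)$ cannot come from path-continuity alone. What one actually needs --- and what both you and the paper implicitly invoke --- is that the derivatives of $S_t(\e^f)$ decay fast enough at infinity to make $\caL V_t(f)$ not just bounded but vanishing at infinity, after which one can argue via uniform continuity (or, more cleanly, via the fact that $S_t(\e^f-1)\in\caD(\caL)$ by semigroup invariance and then use that the generator of a diffusion is a local operator compatible with smooth composition). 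This is not a gap in your strategy, only in the phrasing; the paper is equally terse here.
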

\begin{proof}
The first statement is well-known, see for instance \cite{bgl}.
In order to prove the second one, we first observe that
$\exp(-\|f\|_\infty)\leq S_t(\exp(f))\leq \exp(\|f\|_\infty)$ 
and $\exp(f)\in \caD(\caL)$ (see again \cite{bgl}). Now the  statement
follows from the definition of $\caH$.  The last two statements are
proved as follows. From the semigroup property of $(V_t)_{t\geq 0}$
and  $(S_t)_{t\geq 0}$, 
for each $t>0$ and $|\varepsilon|<t$, we have
\begin{equation}\label{choucroute}
\frac{V_\varepsilon(V_t(f))-V_t(f)}{\varepsilon}=
\frac{1}{\varepsilon} \log\left(
  \frac{S_\varepsilon\left(S_t\left(\e^f\right)\right)}{S_t(\e^f)}\right). 
\end{equation}
Moreover, since $\e^f =1+\tilde{f}$ with $\tilde{f}\in
\caC_c^\infty(\R^d,\R) \subset \caD(\caL)$, we obtain $S_t(\exp(f))\in
\caD(\caL)$ for each $t\geq 0$. 
Therefore
\[
S_\varepsilon\left(S_t\big(\e^f\big)\right)=S_t\big(\e^f\big)+\varepsilon \caL S_t\big(\e^f\big)+o(\varepsilon)
\]
uniformly. The explicit formula for the limit in \eqref{choucroute} gives the last statement.
\end{proof}

\subsection{Some preparatory computations}\label{subsec:computations-nlsg}

We now show how the nonlinear semigroup naturally entersthe picture. For all $t\geq 0$, we have
\begin{align}
\nonumber
\mu_t\left( \e^{f-\mu_t(f)}\right)
&= \mu_{0} \Big(S_t \big(\e^f\big)\Big) \e^{-\mu_{0}(S_t (f))}\\
&=\mu_{0} \Big(\e^{V_t(f) -\mu_0( V_t(f))}\Big) \e^{\mu_{0}\big( V_t(f)- S_t(f)\big)}\,.
\label{decomp}
\end{align}
Therefore, if $\mu_{0}$ satisfies $\gcb{D_{0}}$,  then we can estimate
the first factor in the r.h.s. of \eqref{decomp} 
\begin{equation}\label{lipvtest}
\mu_0\left(\e^{V_t(f) -\mu( V_t(f))}\right)\leq \e^{D_{0}\lip(V_t(f))^2}
\end{equation}
and so we have to estimate $\lip(V_t(f))$, which in the case of diffusion processes will boil down to
estimating $\nabla V_t(f)$.
Concerning the second factor in \eqref{decomp} we define first the
``truly nonlinear'' part of the 
nonlinear generator as follows
\[
\caH_{\mathrm{nl}} (f)= \caH (f)- \caL (f)
\]
for $f\in \caD(\caL) \cap \caD(\caH)$.
In the case of diffusion processes, this operator exactly contains the quadratic term of $\caH$, which coincides in turn with the so-called ``carr\'{e} du champ operator'' (see Section \ref{subsec:agba} below). 

We have the following proposition.
\begin{proposition}\label{duhamelnl}
If $\partial_t-\hat\caL$ is a hypoelliptic diffusion on $\R^d$ with $\caC^\infty$ coefficients,  then
\[
\caH_{\mathrm{nl}} (f)=\Gamma(f)=\frac12 \gen (f^2)-f\gen(f)\;.
\]
Moreover, assume that, for any $f\in\caC_c^\infty(\R^d,\R)$, and any $T>0$, we have
\[
\sup_{0\le t\le T}\big\|\Gamma\big(V_{t}(f)\big)
\big\|_{\caC_{b}^{0}(\R^d,\,\R)}<\infty.
\]
Then for any probability measure $\mu_{0}$ on
$\R^d$, and for all $t\geq 0$, we have
\begin{equation}\label{vtstest}
\mu_0(|V_t (f) -S_t(f)|)\leq \|V_t (f)-S_t(f)\|_\infty \leq
\int_0^t \|\Gamma(V_s(f))\|_\infty \dd s. 
\end{equation}
\end{proposition}
\begin{proof}
It follows from Proposition \ref{celledavant} that
\begin{align*}
\frac{\,\dd (V_t (f)- S_t (f))}{\dd t}
&= \caH (V_t (f))- \caL S_t(f)\\
&= \caH (V_t(f))-\caL V_t(f) + \caL (V_t (f)-S_t (f))\\
&= \caH_{\mathrm{nl}} (V_t (f)) + \caL (V_t (f)-S_t(f))\,.
\end{align*}
As a consequence, we obtain by the variation-of-constant method
\[
V_t (f)- S_t(f)= \int_0^t S_{t-s}\big( \caH_{\mathrm{nl}} (V_s (f))\big) \dd s.
\]
Notice that the use of this method needs appropriate justification (for instance that $V_{t}(f)$ belongs to the domain of
$\caL$). See also \cite{bgl} pages 144-145 for comments about such worries. We now provide full details. 
It will be convenient to distinguish between the generator $\gen$ defined before
and the associated second order differential operator denoted by $\diffop$. 
For $f\in \Azero$ we have $\gen f=\diffop f$ and for any $t\ge0$
\[
\frac{\dd}{\dd t}S_{t}(f)=\caL S_{t}(f)=\diffop S_{t}(f)\;.
\]
We observe that for $f\in\Azero$
\[
\e^{f}=1+g
\]
with $g\in\Azero$ and 
\[
\e^{f}\ge \e^{-\|f\|_{\infty}}>0\;.
\]
In particular $V_{t}(f)=\log\big(1+S_{t}(g)\big)$. By hypoellipticity it follows that $V_{t}(f)(x)$ (as well as $S_{t}(f)(x)$) is $\caC^{\infty}$ in $t$ and $x$.
We have for $s\ge0$ 
\[
\partial_{s}V_{s}(f)(x)=\frac{\caL S_{s}(g)(x)}{1+S_{s}(g)(x)}=
\frac{\diffop S_{s}(g)(x)}{1+S_{s}(g)(x)}=
\diffop V_{s}(f)(x)+\Gamma(V_{s}(f))(x)\;.
\]
In particular for $s=0$ we get $\caH f=\caL f+\Gamma(f)$, which is the first statement.

For $f\in\Azero$. we define a real function $\Lambda(t,x)$ on $\R^{d+1}$ by
\[
\Lambda(t,x)=
\begin{cases}
V_{t}(f)(x)-S_{t}(f)(x)-\int_0^t S_{t-s}\big(\Gamma(V_s (f))\big) \dd s & \text{if} \;t\ge0\\
0 & \text{if} \;t<0\,.
\end{cases}
\]
Note that this function is continuous in $t$ and $x$, bounded on the set $[0,\,T]\times\R^{d}$ for any $T>0$  and satisfies $\Lambda(0,x)=0$ for all $x\in\R^{d}$.
We will now prove that $\partial_{t}\Lambda-\diffop\Lambda=0$ in the sense of distributions.
Observe that for fixed $s\ge0$, the function $S_{t-s}\big(\Gamma(V_s(f))\big)$ is defined through Theorem 2.2.5 in \cite{BL} (see also Sections 2.4 and 2.5 therein).
Let $u\in \caC_c^\infty(\R\times\R^d,\R)$. We have by Fubini's Theorem
\begin{align*}
& \int \int \big(\partial_{t}u-{\diffop}^{\,\dagger}u\big)(t,x)\; \Lambda(t,x)\dd t \dd x\\
& \quad =
\int \int \big(\partial_{t}u-{\diffop}^{\,\dagger}u\big)(t,x)\;\big(V_{t}(f)(x)-S_{t}(f)(x))\dd t \dd x\\
& \qquad -\int \dd s \int_{s}^{\infty} \int \big(\partial_{t}u-{\diffop}^{\,\dagger}u\big)(t,x)\;S_{t-s}\big(\Gamma(V_s (f))\big)\dd t\dd x\,.
\end{align*}
By the definition of derivatives in the sense of distributions, denoting ${\diffop}^{\,\dagger}$ the adjoint of $\diffop$, we obtain
\begin{align*}
& \int \int \big(\partial_{t}u-{\diffop}^{\,\dagger}u\big)(t,x)\,\big(V_{t}(f)(x)-S_{t}(f)(x))\dd t \dd x\\
& \quad  =-\int \int u(t,x)\,\Gamma (V_{t}(f))(x) \dd t \dd x
\end{align*}
and for each $s$ 
\begin{align*}
& \int_{s}^{\infty} \int \big(\partial_{t}u-{\diffop}^{\,\dagger}u\big)(t,x)\;S_{t-s}\big(\Gamma(V_s (f))\big)\dd t\dd x\\
& \quad =- \int u(s,x)\, \Gamma (V_{s}(f))(x)  \dd x.
\end{align*}
Therefore, we have
\[
\int \int \big(\partial_{t}u-{\diffop}^{\,\dagger}u\big)(t,x) \Lambda(t,x)\dd t\dd x=0
\]
and since this holds for any $u\in \caC_c^\infty(\R\times\R^d,\R)$, we
have
\[
\partial_{t}\Lambda-\diffop\Lambda=0
\]
in the sense of distributions. By hypoellipticity, this also holds in the sense of functions.

Since the function $\beta$ occuring in hypothesis (H2) is concave, for any $u>0$ we have $\beta(2\,u)\le 2\, \beta(u)-\beta(0)$. Note that from our hypothesis (H2)
 we have $\beta(0)>0$ since $\sigma(0)\neq 0$. Therefore
for any $u\ge1$, $\beta(u)\le 2\,u\,\beta(1)$ and from the
monotonicity of $\beta$ we get for any $u\ge0$
\[
\beta(u)\le (2\,u+1)\,\beta(1)\,.
\]
Let
\[
\varphi(x)=\log\big(1+\|x\|^{2}\big)\,.
\]
It is left to the reader to check that the above bound on $\beta$
implies that 
there exists $\lambda_{0}>0$ and $C>0$ such that for any $x\in\R^{d}$
\[
\diffop \varphi-\lambda_{0}\varphi\le C\;.
\]
Since $\varphi(x)$ diverges with $\|x\|$, it follows from Theorem
4.1.3 (\textit{iii}) in \cite{BL} that $\Lambda=0$ on
$\R\times\R^{d}$ since $\Lambda$ is bounded  on $[0,\,T]\times\R^{d}$
for any $T>0$. 
  
Because $(S_t)_{t\geq 0}$ is a Markov semigroup, it is a contraction
semigroup in the supremum norm and because $\mu_0$ 
is a probability measure, we obtain the desired inequality.
\end{proof}

Assuming that $\mu_0$ satisfies $\gcb{D_0}$, when we combine
\eqref{decomp}, \eqref{lipvtest} and \eqref{vtstest}, 
we obtain, for all $t\geq 0$,
\begin{equation}\label{summarize}
\mu_t\left( \e^{f-\mu_t(f)}\right) \leq
\exp\left(D_0\lip(V_t(f))^2+ \int_0^t \|\Gamma(V_s(f))\|_\infty \dd s\right). 
\end{equation}
In particular, in the case of diffusion processes on $\R^d$, $\Gamma(g)$ is bounded in terms of $(\nabla g)^2$.
Hence, if we have estimates for $\lip(V_t(f))$ and $\nabla V_t (f)$, we can plug them in immediately.

\subsection{Abstract gradient bound approach}\label{subsec:agba}

In this section we study the questions formulated in Section \ref{hypo} in the context of Markovian diffusion triples, in the sense
of \cite{bgl}, {\em i.e.}, {\em reversible} diffusion processes for which we have the integration by parts formula relating the Dirichlet form and the
carr\'e du champ bilinear form.
Let $(X_t)_{t\geq 0}$ be a Markov diffusion, {\em i.e.}, a solution of the SDE of the form \eqref{diffupro}.
Moreover, we will assume in this subsection that the covariance matrix $a(x)$ is not degenerate, and is bounded, uniformly in $x\in\R^d$, {\em i.e.}, for some $C_1, C_2>0$,
\begin{equation}\label{covcond}
C_1^{-2}\|v\|^2\leq \langle v, a(x)v\rangle \leq C_2^2 \|v\|^2\,.
\end{equation}

To the generator $\gen$ is associated the carr\'e du champ bilinear form $\Gamma(\!\cdot,\!\cdot\!)$ on $\caC^\infty(\R^d,\R)$ functions which are constant outside a compact set given by
\begin{equation}\label{leGamma}
\Gamma(f,g) =\frac12\left( \gen (fg)-g\gen(f)- f\gen (g)\right)=  \langle \nabla f , a\cdot \nabla g\rangle.
\end{equation}
Notice that $\Gamma$ satisfies the so-called diffusive condition, {\em i.e.}, for all smooth functions $\psi:\R\to\R$ and $x\in\R^d$,
\begin{equation}\label{diffusive-cond}
\Gamma(\psi(f), \psi(f))(x)= (\psi')^2(f(x))\, \Gamma(f,f)(x).
\end{equation}
We will further assume that there exists a reversible measure $\nu$ such that the integration by parts formula
\[
\int f(-\gen g) \dd\nu = \int \Gamma(f,g) \dd \nu
\]
holds. The triple $(\R^d, \Gamma, \nu)$ is then a Markov diffusion triple in the sense of \cite[Section 3.1.7]{bgl}.

The second-order carr\'e du champ bilinear form is given by
\[
\Gamma\!_2 (f,g)= \frac12\big(\gen \Gamma( f,g)-\Gamma(\gen  f, g)-\Gamma(f,\gen g)\big).
\]
In what follows, we abbreviate, as usual, $\Gamma(f,f)=: \Gamma(f)$,  $\Gamma\!_2 (f,f)= \Gamma\!_2(f)$.
An important example is when $b=-\nabla U$ and $a=I_d$, in which case the second-order carr\'e du champ bilinear form is given by
\[
\Gamma\!_2 (f,f)= \|\nabla\nabla f\|^2 + \langle \nabla f, \nabla\nabla U (\nabla f)\rangle
\]
where $\nabla\nabla U$ denotes the Hessian of $U$, {\em i.e.}, the matrix of the second derivatives.
By the non-degeneracy and boundedness condition \eqref{covcond}, we have, for all $x\in\R^d$
\begin{equation}\label{C1C2Gamma}
C_1^{-2}\|\nabla f (x)\|^2\leq \Gamma(f) (x) \leq C_2^2 \|\nabla f(x)\|^2\, .
\end{equation}
Following \cite{bgl} we say that the strong gradient bound is satisfied with constant $\rho\in\R$ if for all $t\in \R_+$
\begin{equation}\label{stronggrad}
\sqrt{\Gamma(S_t f)}\leq \e^{-\rho t} S_t \big(\sqrt{\Gamma(f)}\, \big)\,.
\end{equation}
This condition is fulfilled when, {\em e.g.}, the Bakry-Emery curvature bound,
\[
\Gamma\!_2(f)\geq \rho\, \Gamma(f)
\]
is satisfied. We refer to \cite[Chapter 3]{bgl} for the proof and more background on this formalism.
We then have the following general result.

\begin{theorem}
Let $(X_t)_{t\geq 0}$ be a reversible diffusion process such that \eqref{stronggrad} is fulfilled.
Assume that $\mu_{0}$ satisfies $\gcb{D}$. Then, for every $t\geq 0$, $\mu_t$  satisfies $\gcb{D_t}$ with
\begin{equation}\label{dtgradbo}
D_t= D\,C^2_1 \,C^2_2 \e^{-2\rho t} +\, \frac{C_1^2 C^4_2}{2\rho}
\, \big(1-\e^{-2\rho t}\big)\, \cdot
\end{equation}
Hence, in particular, if $\rho>0$, then the unique reversible measure $\nu$ satisfies $\gcb{D_\infty}$
with $D_\infty=  \tfrac{C_1^2 C_2^4}{2\rho}$.
\end{theorem}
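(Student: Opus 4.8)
The plan is to combine the summarizing estimate \eqref{summarize} with a single gradient bound on $V_t(f)$ derived from the strong gradient inequality \eqref{stronggrad}. Since $\mu_{0}$ satisfies $\gcb{D_{0}}$, \eqref{summarize} already tells us that, for every smooth compactly supported $f$ and every $t\geq 0$,
\[
\log\mu_t\!\left(\e^{f-\mu_t(f)}\right)\leq D_{0}\,\lip(V_t(f))^2+\int_0^t\|\caH_{\mathrm{nl}}(V_s(f))\|_\infty\,\dd s .
\]
Hence the whole problem reduces to controlling $\|\nabla V_t(f)\|_\infty$, since this bounds both terms.

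First I would estimate $\nabla V_t(f)$. Because $V_t(f)=\log S_t(\e^f)$ and $\Gamma$ is diffusive, $\Gamma(V_t f)=\Gamma(S_t(\e^f))/(S_t(\e^f))^2$, and likewise $\sqrt{\Gamma(\e^f)}=\e^f\sqrt{\Gamma(f)}$. The function $\e^f$ is bounded, smooth, and equal to $1$ off a compact set, so it lies in $\caD(\caL)$ and \eqref{stronggrad} applies to it (this regularity is exactly what is recorded in Proposition \ref{celledavant} and the cited results of \cite{bgl}). Applying \eqref{stronggrad} to $g=\e^f$, using positivity of $S_t$ and the pointwise bound $\sqrt{\Gamma(f)}\leq C_2\|\nabla f\|\leq C_2\lip(f)$ from \eqref{covcond}, I get
\[
\sqrt{\Gamma(S_t(\e^f))}\leq\e^{-\rho t}\,S_t\!\left(\e^f\sqrt{\Gamma(f)}\right)\leq C_2\,\e^{-\rho t}\lip(f)\,S_t(\e^f),
\]
so that $\sqrt{\Gamma(V_t f)}\leq C_2\,\e^{-\rho t}\lip(f)$ pointwise; converting back via $\|\nabla g\|\leq C_1\sqrt{\Gamma(g)}$ from \eqref{covcond} gives $\lip(V_t f)^2=\|\nabla V_t f\|_\infty^2\leq C_1^2C_2^2\,\e^{-2\rho t}\,\lip(f)^2$.

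Next I would assemble the two terms. For a regular diffusion, $\caH(g)=\e^{-g}\caL\e^{g}=\caL g+\langle\nabla g,a\nabla g\rangle$, so $\caH_{\mathrm{nl}}(g)=\Gamma(g)\leq C_2^2\|\nabla g\|^2$; hence $\|\caH_{\mathrm{nl}}(V_s f)\|_\infty\leq C_2^2\lip(V_s f)^2\leq C_1^2C_2^4\,\e^{-2\rho s}\lip(f)^2$ by the previous step. Integrating in $s$ and inserting into the displayed bound yields $\log\mu_t(\e^{f-\mu_t(f)})\leq D_t\lip(f)^2$ with $D_t$ exactly as in \eqref{dtgradbo} (when $\rho\leq 0$ the integral $\int_0^t\e^{-2\rho s}\,\dd s$ is read in the obvious way). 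Since this holds for all smooth compactly supported $f$, $\mu_t$ satisfies $\gcb{D_t}$; by the equivalence of GCBS and GCB from Appendix~B this is the full Gaussian concentration bound. For the invariant measure, when $\rho>0$ the Bakry-Emery bound forces $\nu$ to be the unique invariant measure and $\mu_t\to\nu$ weakly, so for fixed smooth compactly supported $f$ we have $\mu_t(f)\to\nu(f)$ and $\mu_t(\e^f)\to\nu(\e^f)$ (both $f$ and $\e^f$ being bounded continuous); since $D_t\to D_\infty=\tfrac{C_1^2C_2^4}{2\rho}$, letting $t\to\infty$ in $\log\mu_t(\e^{f-\mu_t(f)})\leq D_t\lip(f)^2$ gives $\gcb{D_\infty}$ for $\nu$. (Without invoking ergodicity one can instead use that $D_t\leq\max(D_0,D_\infty)\,C_1^2C_2^2$ makes $\{\mu_t\}$ tight, every weak limit point is invariant hence equals $\nu$, and GCBS passes to weak limits.)

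The step I expect to be the only genuinely delicate one is the regularity bookkeeping behind the gradient estimate: justifying that the diffusive chain rule and the strong gradient inequality \eqref{stronggrad} may legitimately be applied to $\e^f$, which is not compactly supported, and to $V_t(f)=\log S_t(\e^f)$. This is precisely what Proposition \ref{celledavant} and the cited results of \cite{bgl} are there to supply, so in the end the argument is a short chain of inequalities.
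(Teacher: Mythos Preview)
Your proof is correct and follows essentially the same route as the paper: bound $\|\nabla V_t(f)\|$ via the chain $\|\nabla g\|\leq C_1\sqrt{\Gamma(g)}$, the strong gradient bound \eqref{stronggrad} applied to $\e^f$, the diffusive identity $\sqrt{\Gamma(\e^f)}=\e^f\sqrt{\Gamma(f)}$, and $\sqrt{\Gamma(f)}\leq C_2\|\nabla f\|$; then feed this into \eqref{summarize} using $\caH_{\mathrm{nl}}=\Gamma$. Your added discussion of the regularity needed to apply \eqref{stronggrad} to $\e^f$ and of the passage $t\to\infty$ to reach $\nu$ is welcome detail that the paper leaves implicit.
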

\begin{proof}
Using \eqref{stronggrad} we start by estimating $\|\nabla V_t f\|$ for $f:\R^d\to \R$ which is $\caC^\infty$ with compact support
\begin{align*}
\|\nabla V_t (f)\|
&= \frac{\|\nabla(S_t (\e^f))\|}{S_t(\e^f)} \leq  C_1 \frac{\sqrt{\Gamma (S_t (\e^f))}}{S_t(\e^f)}\\
& \leq C_1 \e^{-\rho t} \frac{S_t \big(\sqrt{\Gamma( \e^f)}\, \big)}{S_t (\e^f)}= C_1 \e^{-\rho t} \frac{S_t \big(\e^f \sqrt{\Gamma(f)}\,\big)}{S_t(\e^f)}\\
&\leq C_1 \e^{-\rho t} \|\sqrt{\Gamma(f)}\|_\infty \leq  C_1 C_2 \e^{-\rho t} \|\nabla f\|_\infty\,.
\end{align*}
The first inequality comes from \eqref{C1C2Gamma}. The second comes from \eqref{stronggrad}. The second equality is the diffusive condition \eqref{diffusive-cond}.
The third inequality follows by
\[
\left|\frac{S_t\left(\e^f g\right)}{S_t\left(\e^f\right)}\right|\leq \frac{S_t\left(\e^f |g|\right)}{S_t\left(\e^f\right)}\leq \|g\|_\infty
\]
where we used that $(S_t)_{t\geq 0}$ is a Markov semigroup, and the fourth inequality follows from \eqref{C1C2Gamma}.
As a consequence we obtain
\begin{equation}\label{nabest}
\lip(V_t (f))=\|\nabla V_t f\|_\infty\leq C_1 C_2 \e^{-\rho t} \|\nabla f\|_\infty\,.
\end{equation}
Using  Proposition \ref{duhamelnl}, \eqref{C1C2Gamma} and \eqref{nabest}, we obtain
\begin{align}
\nonumber
\| V_t (f)- S_t (f)\|_\infty 
& \leq \int_0^t \|\Gamma(V_s(f))\|_\infty \dd s\\
& \leq C^2_2 \int_0^t \|\nabla V_s (f)\|_\infty^2 \dd s \\
& \leq  C^2_1 C_2^4\, \|\nabla f\|_\infty^2   \int_0^t \e^{-2\rho s} \dd s\,.
\label{further}
\end{align}
Combining \eqref{nabest}, \eqref{further} with \eqref{summarize} 
we obtain that $\mu_t$ satisfies $\gcb{D_t}$ with
\[
D_t= D\,C^2_1\, C^2_2 \e^{-2\rho t} + \, C_1^2 C^4_2  \int_0^t \e^{-2\rho s} \dd s
\]
which is the claim of the theorem.
\end{proof}
\begin{remark}
\leavevmode\\
\textup{(}a\textup{)} In case $\Gamma(f)=a^2\|\nabla f\|^2$, we can take $C_1=a^{-2}, C_2=a^2$, so $D_0=D$.
In general $C_1^2 C_2^2>1$, which means that at time $t=0$ we do  not recover the constant $D$ in \eqref{dtgradbo}, but
a larger constant. This is an artifact of the method where we estimate the norm of the gradient via
the carr\'e du champ.\newline
\textup{(}b\textup{)} In case we have an exact commutation relation of the type
\[
\nabla S_t (f) = \e^{-\rho t}S_t \big(\nabla f\big)
\]
such as is the case for the Ornstein-Uhlenbeck process, we obtain directly
\[
\|\nabla V_t (f)\| \leq \e^{-\rho t} \|\nabla f\|_\infty
\]
i.e., without using the bilinear form $\Gamma$.
\end{remark}

\subsection{Coupling approach}\label{sec:coupling}

In this section we develop a coupling approach to obtain a bound on the Lipschitz constant of $V_{t}(f)$ for $f\in\caC_{0}^{\infty}$, allowing to apply Proposition \ref{duhamelnl}. 
This approach will work in some nonreversible situations and also for some degenerate diffusions, where the approach of Section \ref{subsec:agba} fails. 

\subsubsection{Coupling and the nonlinear semigroup}

In the previous section, the essential input coming from the strong gradient bound is the estimate \eqref{nabest} which implies
that for all $x,y\in\R^d$ and all $t\in \R_+$
\begin{equation}\label{oscvt}
\|V_t (f)(x)- V_t (f)(y)\|\leq C_t\, \|\nabla f\|_\infty\, \|x-y\|  \e^{-\rho t}\,.
\end{equation}
Once we have the bound \eqref{oscvt}, we can use it to further estimate the r.h.s. of \eqref{vtstest}, provided we have a control on $\caH_{\mathrm{nl}}$.
Instead of starting from the curvature bound, in this subsection we start from a coupling point of view.
This has the advantage that reversibility is no longer necessary, and moreover we can include degenerate diffusions such as the Ginzburg-Landau diffusions (see below).
We denote by $X^x_t$ the process $(X_t)_{t\geq 0}$ solving the SDE \eqref{laedso} started at $X_0=x$ (with $b$ and $\sigma$ not depending explicitly on time, as assumed for the whole
section).

As an important example to keep in mind, consider the Ornstein-Uhlenbeck process on $\R^d$, with generator
\[
-\langle Ax, \nabla\rangle + \Delta
\]
where $\Delta$ denotes the Laplacian in $\R^d$, and where $A$ is a $d\times d$ matrix.
In that case we have
\begin{equation}\label{ouxxt}
X^x_t= \e^{-At}x + \int_0^t \e^{-2A(t-s)} \dd W_s
\end{equation}
which depends deterministically, and in fact linearly, on $x$.
\begin{definition}
Let $\upgamma: [0,+\infty)\to [0,+\infty)$ be  a measurable function such that $\upgamma(0)=1$.
We say that the process $(X_t)_{t\geq 0}$ can be coupled at rate $\upgamma$ if, for all $x,y\in\R^d$,
there exists a coupling of $\big(X^x_t\big)_{t\geq 0}$ and $\big(X^y_t\big)_{t\geq 0}$ such that almost surely in this coupling
\begin{equation}\label{discont}
d\big(X^x_t, X^y_t\big)\leq d(x,y)\, \upgamma(t)
\end{equation}
where $d$ denotes the Euclidean distance.
\end{definition}
In the case of the Ornstein-Uhlenbeck  process in $\R^d$, we have from \eqref{ouxxt} (which implicitly defines a coupling, because we use
\eqref{ouxxt} for all $x$ with the same Brownian realization)
\[
\big\|X^x_t- X^y_t\big\|\leq \|\e^{-At}\|\, \|x-y\|
\]
hence $\upgamma(t)=\|\e^{-At}\|$. Notice that $\upgamma(t)$ can be ``expanding'' or ``contracting'', depending on the spectrum of $A$. More precisely, $\upgamma$ will be eventually
contracting if the numerical range of $A$ lies in the half-plane of complex numbers with non-positive real part.
\begin{remark}
In the context of Brownian motion on a Riemannian manifold, it is proved in \cite{renesse} that \eqref{discont}
for $\upgamma(t)= \e^{-Kt/2}$ is equivalent with having $K$ as a lower bound for the Ricci curvature, which in that context
is equivalent with the Bakry-Emery curvature bound. In general however, the relation between the coupling condition \eqref{discont}
and the Bakry-Emery curvature bound is not so simple. In particular, the coupling approach applies beyond reversibility, in the context of degenerate diffusions, and beyond the setting of exponential decay of $\upgamma(t)$ in \eqref{discont}.
\end{remark}

We have the following result. Let $\caW_1$ be  the space of probability measures $\mu$ such that
$\int d(0,x)  \dd\mu(x)<+\infty$ equipped with the distance
\begin{align}
\nonumber
d_{\caW_1} (\mu,\nu) &= \sup\left\{\int f \dd\mu - \int f \dd\nu: \lip(f)\leq 1\right\}
\\
&= \inf \left\{ \int d(x,y) \dd P: P\ \text{coupling of}\ \mu, \nu\right\}
\label{flan}
\end{align}
where the second equality follows from the Kantorovich-Rubinstein theorem \cite[Theorem 11.8.2, p. 420]{dudley}.
\begin{theorem}\label{coupthm}
Assume that $(X_t)_{t\geq 0}$ can be coupled at rate $\upgamma$, where $\upgamma$ is square integrable on compacts subsets of $\R_+$.
Assume that $\mu_{0}$ satisfies $\gcb{D}$. Then for all $t\geq 0$, and for all $f$ smooth with compact support we have the estimate
\begin{equation}\label{dtcoup}
\log \mu_t (\e^{f-\mu_t(f)})\leq D \,\lip(f)^2 \,\upgamma(t)^2 +C_2^2\lip(f)^2\int_0^t  \upgamma(s)^2 \dd s
\end{equation}
where $C_2$ is defined in \eqref{covcond}.
As a consequence, for every $t>0$, $\mu_t$ satisfies $\gcb{D_t}$ with
\begin{equation}\label{dtcoupdif}
D_t= D\, \upgamma(t)^2 + C_2^2 \int_0^t  \upgamma(s)^2 \dd s \,.
\end{equation}
In particular, if $\int_0^{+\infty}  \upgamma(s)^2 \dd s <\infty$, then
every weak limit point of $\{\mu_t, t\geq 0\}$  satisfies $\gcb{D_\infty}$ with
\[
D_\infty= C_2^2 \int_0^{+\infty}  \upgamma(s)^2 \dd s \,.
\]
Moreover, the stationary probability measure $\nu\in\caW_1$ satisfies $\gcb{D_\infty}$.
\end{theorem}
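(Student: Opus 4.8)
The plan is to combine the coupling hypothesis with the nonlinear semigroup machinery set up in Sections 3.1--3.2. First I would derive the basic gradient-type estimate for $V_t(f)$ coming from the coupling: since for smooth $f$ we have $S_t(\e^f)(x) = \E\big[\e^{f(X^x_t)}\big]$, the coupling inequality \eqref{discont} gives, for any $x,y$,
\[
\big| V_t(f)(x) - V_t(f)(y)\big| = \big|\log \E\,\e^{f(X^x_t)} - \log \E\,\e^{f(X^y_t)}\big| \leq \lip(f)\, \E\, d(X^x_t, X^y_t) \leq \lip(f)\, d(x,y)\, \gamma(t),
\]
using that $\log$ of an expectation of positive quantities is monotone and the elementary bound $|\log a - \log b| \le \sup |f(X^x) - f(X^y)|$ obtained by conditioning on the coupled pair. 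Hence $\lip(V_t(f)) \le \lip(f)\,\gamma(t)$, which is the analogue of \eqref{nabest} with $C_1 C_2 \e^{-\rho t}$ replaced by $\gamma(t)$.

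Next I would feed this into \eqref{summarize}. The first term is immediately bounded by $D_0\,\lip(f)^2\gamma(t)^2$. For the second term, recall from \eqref{gioia}-type reasoning that for diffusions $\caH_{\mathrm{nl}}(g) = \Gamma(g) = \langle \nabla g, a\nabla g\rangle \le C_2^2\|\nabla g\|_\infty^2$, so
\[
\int_0^t \|\caH_{\mathrm{nl}}(V_s(f))\|_\infty\,\dd s \le C_2^2 \int_0^t \|\nabla V_s(f)\|_\infty^2\,\dd s \le C_2^2\,\lip(f)^2 \int_0^t \gamma(s)^2\,\dd s,
\]
which yields \eqref{dtcoup} and therefore \eqref{dtcoupdif} after invoking the equivalence of GCBS and GCB from Appendix B to pass from smooth compactly supported $f$ to all Lipschitz $f$ (a small technical point: \eqref{dtcoup} is stated for smooth $f$, and one approximates a general Lipschitz $f$ by smoothing/truncating, using that $\lip$ is not increased in the limit).

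For the statements about $t \to \infty$: if $\int_0^\infty \gamma(s)^2\,\dd s < \infty$ then $D_t$ is bounded uniformly in $t$ by $D_\infty + D_0\sup_t\gamma(t)^2$, and in fact $\gamma(t)\to 0$ along the relevant scale so $D_t \to D_\infty$; hence along any weakly convergent subsequence $\mu_{t_k} \to \mu_\infty$, lower semicontinuity of $\mu \mapsto \log\mu(\e^{f-\mu(f)})$ under weak convergence (for bounded continuous $f$) gives $\log\mu_\infty(\e^{f-\mu_\infty(f)}) \le D_\infty \lip(f)^2$, i.e. $\mu_\infty$ satisfies $\gcb{D_\infty}$. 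Finally, when $\gamma$ is integrable-in-square the coupling also forces $d_{\caW_1}(\mu_t,\nu_t) \to 0$ for any two starting measures (contracting in $\caW_1$ at rate $\gamma$), so there is a unique invariant $\nu \in \caW_1$ and $\mu_t \to \nu$ in $\caW_1$, hence weakly; applying the previous step with $\mu_t \to \nu$ gives $\nu$ satisfies $\gcb{D_\infty}$. The main obstacle I anticipate is the justification of the interchange of the coupling bound with the nonlinear semigroup at the level of the required regularity — i.e. making sure the estimate $\lip(V_t(f)) \le \lip(f)\gamma(t)$ and the bound \eqref{summarize} are simultaneously valid for the same class of test functions, and then cleanly upgrading from smooth compactly supported $f$ to all Lipschitz $f$ via Appendix B without losing the constant.
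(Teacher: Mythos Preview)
Your proposal is correct and follows essentially the same route as the paper: the paper isolates the Lipschitz estimate for $V_t(f)$ as a separate lemma and the $\caW_1$-contraction/uniqueness as another, then combines them with \eqref{summarize} and \eqref{gioia} exactly as you outline.

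One small correction worth flagging: your displayed intermediate bound
\[
\big|\log\E\,\e^{f(X^x_t)}-\log\E\,\e^{f(X^y_t)}\big|\le \lip(f)\,\E\,d(X^x_t,X^y_t)
\]
is false in general (take $X^y_t\equiv 0$, $f(x)=x$, and $X^x_t$ heavy-tailed to see the log-moment dominate the mean). The correct step, which your own verbal justification actually describes and which the paper carries out, is to write
\[
\E\,\e^{f(X^x_t)}=\E\big[\e^{f(X^y_t)}\e^{f(X^x_t)-f(X^y_t)}\big]
\]
and use the \emph{almost sure} bound $d(X^x_t,X^y_t)\le d(x,y)\gamma(t)$ from \eqref{discont} to extract the deterministic factor $\e^{\lip(f)\,d(x,y)\gamma(t)}$ before dividing. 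Since the coupling hypothesis is an a.s.\ bound, this goes through and your final estimate $\lip(V_t(f))\le\lip(f)\gamma(t)$ is correct; the rest of the argument then matches the paper.
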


Before giving the proof of this theorem, we establish two lemmas.
The first one provides a general estimate on the variation of $V_t f$.
\begin{lemma}\label{vtflem}
Let $f$ be Lipschitz and assume that $(X_t)_{t\geq 0}$ can be coupled at rate $\upgamma$.
Then for all $t\geq 0$ and $x,y\in \R^d$ we have
\[
V_t (f)(x)- V_t (f)(y)\leq \lip(f)\, \upgamma(t)\, d(x,y)\,.
\]
As a consequence, for all $t\geq 0$,
\[
\lip(V_t (f))\leq \lip (f)\, \upgamma(t)\, .
\]
\end{lemma}
\begin{proof}
Let us denote by $\widehat{\E}$ expectation in the coupling of $(X^x_t)_{t\geq 0}$ and $(X^y_t)_{t\geq 0}$ for which
\eqref{discont} holds (which exists by assumption).
Then we have
\begin{align*}
\exp(V_t (f)(x)-V_t (f)(y))
&= \frac{\widehat{\E}\left(\e^{f(X^x_t)}\right)}{\widehat{\E}\left(\e^{f(X^y_t)}\right)}=
\frac{\widehat{\E}\left(\e^{f(X^y_t)}(\e^{f(X^x_t)-f(X^y_t)})\right)}{\widehat{\E}\left(\e^{f(X^y_t)}\right)}\\
& \leq \frac{\widehat{\E}\left( \e^{f(X^y_t)} \e^{\lip(f)\, d(X^x_t,X^y_t) }\right)}{\widehat{\E}\left(\e^{f(X^y_t)}\right)}\\
& \leq \frac{\widehat{\E}\left( \e^{f(X^y_t)} \e^{\lip(f) \, d(x,y) \upgamma(t)}\right)}{\widehat{\E}\left(\e^{f(X^y_t)}\right)}\\
& = \e^{\lip(f)\, d(x,y)\, \upgamma(t)}
\end{align*}
where in the last inequality we used \eqref{discont}.
\end{proof}

Notice that in lemma \ref{vtflem} it is not required that $\upgamma(t)\to 0$ as $t\to+\infty$, {\em i.e.}, the coupling does not have to be successful.
However if one wants to pass to the limit $t\to+\infty$ then it is important that $\upgamma(t)\to 0$ as $t\to+\infty$.
This in turn implies, as we see in the next lemma that, among all probability  measures in the Wasserstein space $\caW_1$, there
is a unique invariant probability measure $\nu$, and for all $\mu_{0}\in \caW_1$, $\mu_t\to\nu$ weakly as $t \to+\infty$.

\begin{lemma}\label{waslem}
Assume that $(X_t)_{t\geq 0}$ can be coupled at rate $\upgamma$ and $\upgamma(t)\to 0$ as $t\to+\infty$.
Then there exists a unique invariant probability measure $\nu$ in $\caW_1$. Moreover, for all
$\mu_{0}\in \caW_1$, $\mu_t\to\nu$ as $t\to+\infty$.
\end{lemma}
\begin{proof}
Let $\mu_{0}$, $\nu_{0}$ be elements of $\caW_1$ and let $f$ be a Lipschitz function with $\lip(f)\leq 1$.
Because  $\mu_{0}$, $\nu_{0}$ are elements of $\caW_1$, there exists a coupling $\pee$ such that
\[
\int d(x,y) \dd \pee(x,y)= d_{\caW_1} (\mu_{0}, \nu_{0}) <+\infty\,.
\]
The infimum in definition \eqref{flan} is attained, see  \cite[Theorem 11.8.2, p.420]{dudley}.
Then
\begin{align*}
\int f \dd\mu_t-\int f \dd\nu_t
&= \int \widehat{\E} \big(f(X^x_t)-  f(X^y_t)\big) \dd \pee(x,y)\\
&\leq  \int \widehat{\E}\big(d(X^x_t, X^y_t)\big)\dd \pee(x,y)\\
& \leq  \upgamma(t)\,  d_{\caW_1} (\mu_{0}, \nu_{0})\,.
\end{align*}
This shows that for all  $\mu_{0}, \nu_{0}\in \caW_1$,  and for all $t\geq 0$,
\begin{equation}\label{bistro}
d_{\caW_1} (\mu_t, \nu_t) \leq \upgamma(t)\, d_{\caW_1} (\mu_{0},\nu_{0})\,.
\end{equation}
Existence of an invariant measure $\nu\in\caW_1$ now follows via a standard contraction argument.
If $\mu, \nu\in\caW_1$ are both invariant then \eqref{bistro} gives, after taking $t\to+\infty$:
$d_{\caW_1} (\mu, \nu)=0$, which shows uniqueness of the invariant measure $\nu\in\caW_1$.
The fact that $\mu_{0}\in \caW_1$, implies
$\mu_t\to\nu$ as $t\to+\infty$ then also follows from \eqref{bistro}.
\end{proof}

\noindent{{\bf{\small{ P}{\scriptsize ROOF}} of Theorem \ref{coupthm}}.}
By Proposition \ref{duhamelnl} (and by Rademacher's theorem recalled above) we get for any $f:\R^d\to\R$ which is $\caC^\infty$ with compact support
\[
\caH_{\mathrm{nl}} (f)= \Gamma(f) \leq C_2^2\, \|\nabla f\|_\infty^2 \leq  C_2^2 \lip(f)^2 \,.
\]
Hence, using \eqref{summarize} and Lemma \ref{vtflem}, this establishes \eqref{dtcoup} and \eqref{dtcoupdif}.
The last statement follows from Lemma \ref{waslem}. $\;\qed$

\medskip

As an application we have the following result on Markovian diffusions with covariance matrix $a$ not depending on the location $x$.
We will prove later on a generalization of this result by using different techniques (Theorem \ref{faible_descente}). Nevertheless,
the following theorem provides a better bound for $D_t$.

\begin{theorem}\label{cthmW}
Let $(X_t)_{t\geq 0}$ denote a diffusion process on $\R^d$ with generator
of type \eqref{gendif}, and where the drift does not depend on time, and where the covariance matrix $a$ does not depend on time and location $x,t$. Let $\|a\|$ denote Euclidean norm on matrices.
Assume furthermore that the function $b:\R^d\to\R^d$ is continuously differentiable and the
differential $D_x b$ satisfies the estimate
\begin{equation}\label{driftfaitout}
\langle D_xb(x) (u), u\rangle\leq -\kappa\, \|u\|^2
\end{equation}
for all $x,u\in\R^d$ and some $\kappa\in\R$.
Let $\mu_{0}$ satisfy $\gcb{D_{0}}$, then, for all $t>0$, $\mu_t$ satisfies $\gcb{D_t}$ with
\begin{equation}\label{dtconv}
D_t= D_{0}\e^{-2\kappa t} +\,  \frac{\|a\|}{2\kappa}\big(1-\e^{-2\kappa t}\big)\,.
\end{equation}
Moreover, if $\kappa>0$, then $\mu_t\to\nu$ as $t\to+\infty$ where $\nu$ is the unique invariant probability measure, and it satisfies $\gcb{\|a\|/(2\kappa)}$.
In particular, if $b=-\nabla U$, where the potential $U:\R^d\to\R$ is $\caC^2$, then \eqref{driftfaitout} reduces to the convexity condition
\[
\langle \nabla\nabla Uu,u\rangle \geq \kappa \|u\|^2\,.
\]
\end{theorem}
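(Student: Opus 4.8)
The plan is to obtain this as a direct application of Theorem~\ref{coupthm}: I will show that the drift condition \eqref{driftfaitout} forces the process to be coupleable at rate $\gamma(t)=\e^{-\kappa t}$, and then simply read off $D_t$ from \eqref{dtcoupdif}.

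First I would build the \emph{synchronous} coupling. Given $x,y\in\R^d$, let $X^x_t$ and $X^y_t$ solve \eqref{diffupro} (with generator of the form \eqref{gendif}) driven by one and the same Brownian motion $\{W_t, t\geq 0\}$. Because the covariance matrix $a$ does not depend on the location, the stochastic integrals of the two equations are identical and cancel in the difference, so $Z_t:=X^x_t-X^y_t$ solves the random \emph{ordinary} differential equation $\dot Z_t=b(X^x_t)-b(X^y_t)$ with $Z_0=x-y$. Writing $b(X^x_t)-b(X^y_t)=\int_0^1 D_xb\big(X^y_t+rZ_t\big)(Z_t)\,\dd r$ and applying \eqref{driftfaitout} under the integral,
\begin{align*}
\frac{\dd}{\dd t}\|Z_t\|^2
&=2\big\langle Z_t,\,b(X^x_t)-b(X^y_t)\big\rangle
=2\int_0^1\big\langle Z_t,\,D_xb(X^y_t+rZ_t)(Z_t)\big\rangle\,\dd r\\
&\leq -2\kappa\,\|Z_t\|^2 ,
\end{align*}
so Gronwall's inequality gives $\|Z_t\|^2\leq \e^{-2\kappa t}\|x-y\|^2$, i.e.\ $d(X^x_t,X^y_t)\leq \e^{-\kappa t}\,d(x,y)$ almost surely in this coupling (note $\gamma(0)=1$, as required). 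Hence $\{X_t,t\geq 0\}$ can be coupled at rate $\gamma(t)=\e^{-\kappa t}$.

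Now Theorem~\ref{coupthm} applies. From \eqref{covcond} one may take $C_2^2=\|a\|$ (the operator norm of the symmetric positive definite matrix $a$), so \eqref{dtcoupdif} reads, for $\mu_{0}$ satisfying $\gcb{D_{0}}$,
\[
D_t=D_{0}\,\e^{-2\kappa t}+\|a\|\int_0^t\e^{-2\kappa s}\,\dd s
=D_{0}\,\e^{-2\kappa t}+\frac{\|a\|}{2\kappa}\big(1-\e^{-2\kappa t}\big),
\]
which is exactly \eqref{dtconv}. If $\kappa>0$, then $\int_0^\infty\gamma(s)^2\,\dd s=\tfrac1{2\kappa}<\infty$; moreover $\gcb{D_{0}}$ is easily seen to imply a finite exponential moment of $\|\cdot\|$, in particular $\mu_{0}\in\caW_1$, so Lemma~\ref{waslem} gives the existence of a unique invariant measure $\nu\in\caW_1$ with $\mu_t\to\nu$, and the last assertion of Theorem~\ref{coupthm} yields $\nu\in\gcb{\|a\|/2\kappa}$. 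Finally, if $b=-\nabla W$ with $W\in\caC^2$, then $D_xb(x)=-\nabla\nabla W(x)$, so \eqref{driftfaitout} is literally $\langle \nabla\nabla W(x)\,u,u\rangle\geq\kappa\|u\|^2$.

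The only step needing genuine care is the synchronous coupling estimate, and its one subtlety is precisely the hypothesis that $a$ is independent of the location: this is what makes the noise cancel in $Z_t$ and reduces the comparison to a deterministic Gronwall argument. If $a$ varied with $x$, a nontrivial martingale term would survive in $\dd\|Z_t\|^2$ and one would need a cleverer coupling of the driving noises or a quadratic-variation correction; existence, uniqueness and non-explosion of the two solutions are otherwise covered by the standing regularity assumptions together with the monotonicity bound \eqref{driftfaitout}.
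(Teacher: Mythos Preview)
Your proof is correct and follows essentially the same route as the paper: synchronous coupling with a common Brownian motion (which cancels the noise because $a$ is $x$-independent), the integral representation of $b(X^x_t)-b(X^y_t)$ to feed into \eqref{driftfaitout}, Gronwall to get $\gamma(t)=\e^{-\kappa t}$, and then Theorem~\ref{coupthm} with $C_2^2=\|a\|$. Your additional remarks on $\mu_0\in\caW_1$ and on why the constancy of $a$ is essential are welcome but not strictly needed.
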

\begin{proof}
We have $\|\caH_{\mathrm{nl}}(f)\|=\Gamma(f)\leq \|a\| (\nabla f)^2$.
Therefore by Theorem \ref{coupthm} it suffices to see that we have a coupling rate $\upgamma(t)= \e^{-\kappa t}$.
We couple $X^x_t, X^y_t$ by using the same realization of the underlying Brownian motion $(W_t)_{t\geq 0}$, and as a  consequence, because $a$ does
not depend on $x$, the difference between $X^x_t$ and $X^y_t$ is evolving according to
\[
\frac{\dd}{\dd t} (X^x_t-X^y_t)= b(X^x_t)- b(X^y_t)\,.
\]
We have
\begin{align*}
b(X^x_t)- b(X^y_t)
&=\int_0^1 \frac{\dd}{\dd s} \big(b(s X^x_t+ (1-s) X^y_t)\big) \dd s
\\
&=\int_{0}^{1}D_x b\big(\xi(s)\big)(X^x_t-X^y_t)\,\dd s
\end{align*}
with
\[
\xi(s)=s X^x_t+(1-s)X^y_t.
\]
Therefore
\[
\frac{\dd (\|X^x_t-X^y_t\|^2)}{\dd t}=
2\int_{0}^{1}\langle X^x_t-X^y_t, D_xb(\xi(s)) (X^x_t-X^y_t)
\rangle\dd s \leq -2\kappa\, \|X^x_t-X^y_t\|^2
\]
which implies by Gronwall's lemma
\[
\|X^x_t-X^y_t\|\leq \e^{-\kappa t} \|x-y\|
\]
for all $x,y\in\R^d$ and for all $t\in\R_+$.
\end{proof}

\begin{remark}\label{dejoliesremarques}
\leavevmode\\
a) Notice that in the approach based on the strong gradient bound, we needed non-degeneracy of the covariance matrix $a$
in \eqref{diffupro}, cf. Condition \eqref{covcond}. In the coupling setting, we do allow the matrix $a$ to be  degenerate, but not depending on $x$,
and the condition is only on the drift $b$.\newline
b) Unlike the time-dependent constant $D_t$,   given via the strong gradient bound \eqref{dtgradbo},
the bound \eqref{dtconv} yields the correct constant $D$  at time zero. Remark that the constant of  the limiting stationary distribution, which is $\|a\|/2\kappa$, is invariant under linear rescaling of time, as it should. More precisely, if we multiply the generator by a factor $\alpha$,
$\|a\|$ is multiplied by this same factor $\alpha$, and so is the constant $\kappa$.\newline
c) Note that inequality \eqref{driftfaitout} for all $x$ and $u$
is equivalent to
\[
\langle b(x)-b(y), x-y \rangle \leq -\kappa \|x-y\|^2
\]
for all $x,y$. 
Better and more general bounds under condition \eqref{driftfaitout} will be obtained in Theorem \ref{faible_descente}, assuming however non-degeracy.
\end{remark}

\subsubsection{Examples}

\paragraph{Example 1: Ornstein-Uhlenbeck process and  Brownian motion.}\leavevmode\\
Coming back to the simple example of the Ornstein-Uhlenbeck process \eqref{ou}, we have coupling rate
\[
\upgamma (t)= \e^{-\kappa t}
\]
and we find \eqref{dtcoup}, {\em i.e.}, the time evolution of the constant in the Gaussian concentration bound is the same
in general as for the special case of a Gaussian starting measure, as we may take $C_2=1$.
If  we have a standard Brownian motion, then $\upgamma(t)=1$ and the formula \eqref{dtcoupdif} reads (since $\|a\|=1$)
\[
D_t= D_{0} + t
\]
which is sharp if the starting measure is the normal law $\mu_0=\caN(0,\si^2)$, which at time $t$ gives $\mu_t=\caN(0,\si^2+t)$.

\paragraph{Example 2: Ginzburg-Landau dynamics with boundary reservoirs.}\leavevmode\\
We consider the process $(X_t)_{t\geq 0}$ on $\R^{\sN}$ with generator
\[
L= \sum_{i=1 }^{N-1} (\partial_i-\partial_{i+1})^2 -(x_{i+1}-x_i)(\partial_{i+1}-\partial_{i}) + L_1 +L_{\sN}
\]
where $\partial_i$ denotes partial derivative with respect to $x_i$, and
where the extra operators $L_1$ and $L_{\sN}$ model the reservoirs and are given by
\[
L_1  = \check{b}_1(x_1)\, \partial_1 + \frac{\si^2_1}{2} \partial_1^2,\;
L_{\sN}  =  \check{b}_{\sN} (x_{\sN}) \, \partial_{\sN} + \frac{\si_{\sN}^2}{2} \partial_{\sN}^2\,.
\]
Here, $\si_1, \si_{\sN}>0$, and the drifts associated to the reservoirs $ \check{b}_1,  \check{b}_{\sN}:\R\to\R$ are smooth functions.
Such diffusion processes are studied in the literature on hydrodynamic limits, see {\em e.g.} \cite{gpv}.

This models a non-equilibrium transport system  driven  by reservoirs with drift $ \check{b}_1,  \check{b}_{\sN}$.
In absence of the reservoir driving, the bulk system with generator
$\sum_{i=1 }^N (\partial_i-\partial_{i+1})^2 -(x_{i+1}-x_i)(\partial_{i+1}-\partial_{i})$ has
reversible measures which are products of mean zero Gaussian random variables.
If $ \check{b}_1(x_1)= -\alpha x_1,  \check{b}_{\sN}(x_{\sN})= -\alpha x_{\sN}$, with $\alpha_1, \alpha_{\sN}>0$, then the system is in equilibrium with reversible Gaussian
product measure $C\exp(-\alpha\sum_{i=1}^N x_i^2)$ (where $C$ is the normalizing constant).

In all other cases, by the coupling to the reservoirs, a non-equilibrium steady state is created.

For the choice  $\check{b}_1(x)=-\alpha_1 x,  \check{b}_{\sN}(x)=-\alpha_{\sN} x$, with $\alpha_1\not=\alpha_{\sN}$, this corresponds to a ``non-equilibrium'' Ornstein-Uhlenbeck
process, for which it can be shown that the unique stationary measure $\mu$ is a product of mean zero Gaussians, with variance given by
\[
\int x_i^2 \dd \mu (x)= \frac{1}{\alpha_1} +\left(\frac{1}{\alpha_{\sN}}- \frac{1}{\alpha_1}\right)\frac{ i}{N+1}
\]
linearly interpolating between the left and right reservoirs.

The noise in the system is degenerate, but does not depend on $x$, which means that the coupling condition is satisfied.
The covariance matrix $a$ of \eqref{diffupro} is given by
\[
a_{ii}= 2, 2\leq i\leq N-1, a_{11}=1, a_{\sN\sN}=1, a_{i,i+1} =-1, \,1\leq i\leq N-1
\]
the other entries being equal to $0$.
The drift $b(x)$ is given by
\begin{align*}
& b_i(x)=x_{i+1}+x_{i-1}-2x_i, \;1<i<N\\
& b_1(x)=x_2-x_1+ \check{b}_1(x_1),\\
& b_{\sN}(x)=x_{\sN-1}-x_{\sN}+ \check{b}_{\sN}(x_{\sN}).
\end{align*}

If the drifts associated to the reservoirs $ \check{b}_1,  \check{b}_{\sN}$ are not linear, then the stationary non-equilibrium state is unknown
and not Gaussian.
A direct application of Theorem \ref{cthmW} then gives the following result. Let $-\Delta$ denote the discrete laplacian defined via $(\Delta u)_i= u_{i+1}+u_{i-1}-2u_i$ for $2<i<N-1$, and
$(\Delta u)_1= u_2 -u_1, (\Delta u)_{\sN} = u_{\sN-1}-u_{\sN}$.
\begin{proposition}
If the reservoir drifts are such that
\[
\langle u, -\Delta u \rangle + u_1^2\,   \check{b}_1'(x_1) + u_{\sN}^2\,  \check{b}_{\sN}'(x_{\sN}) \leq -\kappa_{\sN} \|u\|^2,\;\forall u\in \R^{\sN}
\]
for some $\kappa_{\sN}>0$, then the unique stationary measure of the process with generator $\gen$ satisfies $\gcb{D}$, with $D=\|a\|/(2\kappa_{\sN})\leq 2/\kappa_{\sN}$.
\end{proposition}

\paragraph{Example 3: Perturbation of the drift.}\leavevmode\\
Remark that if \eqref{driftfaitout} is satisfied for the drift $b$ with constant $\kappa>0$ and $\tilde{b}$ is such that
$\langle D_x(\tilde{b}-b) (u), u\rangle \leq \epsilon \|u\|^2$, for some $0<\epsilon<\kappa$, then  obviously, \eqref{driftfaitout} is
satisfied for the drift $\tilde{b}$ with constant $\tilde{\kappa}=\kappa-\epsilon$.
For instance, if $\tilde{b}(x)=-\nabla W (x) +\eta(x)$, where $W(x)$ is a strictly convex potential, then if $\|D_x\eta\|_\infty$ is
sufficiently small, there is a unique invariant probability measure $\nu$ which satisfies GCBS. However, $\eta$ is
allowed to be of non-gradient form, which implies that $\nu$ is not known in explicit form.
The same applies to systems where one adds sufficiently weak ``boundary'' reservoirs as long as the noise associated to these reservoirs does not depend on $x$.

\paragraph{Example 4:  A degenerate diffusion.}\leavevmode\\
We consider the diffusion  on $\R^{2}$ given by
\[
\begin{cases}
\dd q=& \, v\dd t-\alpha(q)\dd t\\
\dd v=& -\gamma\,v\dd t+\beta(q)\dd t+\dd W_{t}
\end{cases}
\]
where $\gamma>0$, $\alpha$ and $\beta$ are $\caC^{\infty}$ functions with bounded derivatives satisfying 
\begin{equation}\label{condcontrac}
\inf_{q\in\R}\frac{\alpha'(q)}{\big(1+\beta'(q)\big)^{2}}> \frac{1}{4\,\gamma}\;.
\end{equation}
This diffusion satisfies our  hypotheses (H1) and (H2).


The drift is the vector field $b$ on $\R^{2}$ given by
\[
b(q,v)=
\begin{pmatrix}
v-\alpha(q)\\
-\gamma \,v +\beta(q)
\end{pmatrix}
\]
and we leave to the reader to check that under the hypothesis \eqref{condcontrac}, condition \eqref{driftfaitout} of Theorem \ref{cthmW} is satisfied for some $\kappa>0$.
The matrix $\sigma$ is given by
\[
\sigma(q,v)=
\begin{pmatrix}
0 & 0\\
0 & 1\\
\end{pmatrix}
\]
which is obviously degenerate.
The generator $\caL$ of this diffusion is given by
\[
\caL =\frac{1}{2} \partial_{v}^{2} +\big(\beta(q)-\gamma
v)\partial_{v} + \big(v-\alpha(q)\big)\partial_{q}
\]
and it is left to the reader to check that hypoellipticity holds which follows easily from H\"ormander's condition. Therefore the conclusions of Theorem  \ref{cthmW} hold.
 
\section{Further applications of the distance Gaussian moment}\label{sec:dgma}

In this section, we come back to the SDE \eqref{laedso} with drift $b$ and covariance $\sigma$ can possibly depend explicitly on time (in contrast with the previous section)
In Section \ref{sec:general-results-propagation-GCB}, we used Theorem \ref{thmgaussianexpmoment} to obtain a general result on the propagation of Gaussian concentration bounds.
Here we apply it to diffusions ``coming down from infinity'', and to diffusions with space-time dependent drift and covariance. 

\subsection{Diffusions coming down from infinity}

As a first example of application, we consider diffusions ``coming
down from infinity'' for which we show that from any starting measure,
at positive times $t>0$, $\gcb{D}$ holds. We refer to \cite{bal} for
more on diffusions ``coming
down from infinity'' in the one-dimensional case.

We consider a diffusion process on $\R^{d}$ which solves the SDE \eqref{diffupro}, and we further assume that for some $C_1, C_2>0$ and for any $t\ge0$, $x\in\R^{d}$
and $v\in\R^{d}$
\begin{equation}\label{covcondt}
C_1\|v\|^2\leq \langle v, a(t,\,x)v\rangle \leq C_2 \|v\|^2.
\end{equation}

We introduce the following condition on the drift.
\begin{condition}\label{condH}
There exists a real, non-negative, non-decreasing and $\caC^1$
function $h$ and a constant $A>0$
such that for all $x\in \R^d$ and all $t\ge0$
\[
\frac{\langle x, b(t,x)\rangle}{\|x\|}\le A-h(\|x\|)\;.
\]
\end{condition}

\begin{theorem}\label{thm-descente}
Under Condition \ref{condH}, if additionally we have the integrability condition
\begin{equation}\label{intcondi}
\int_{0}^{+\infty}\frac{\dd u}{h(u)}<+\infty
\end{equation}
then there exists $t_*>0$, a non-negative bounded function $C$ on $[0,t_*]$, and a
constant $\alpha>0$ such that for all $0\leq t \leq t_*$
\begin{equation}\label{camion}
\sup_{x\in \R^{d}}\E_{x}\left(\e^{\alpha\,\|X_{t}\|^{2}}\right)\leq C(t)\;.
\end{equation}
\end{theorem}

We deduce the following result showing that immediate Gaussian concentration arises for diffusions coming down from infinity.

\begin{theorem}\label{tout}
Assume that Condition \ref{condH} and \eqref{intcondi} hold.
Let $\mu_{0}$ be any probability measure on (the Borel field of) $\R^d$.
Let $t_*$, $\alpha$, and $C(t)$ be as in Theorem \ref{thm-descente}. Then, for
all $t>0$, the probability measure $\mu_t$ defined by
\[
\mu_t(f)= \E_{\mu_{0}}\big( f(X_t)\big),\, \forall f\in \mathscr{C}_b(\R^d)
\]
satisfies $\gcbl{D_t}$ where
\[
D_t=
\begin{cases}
\frac{C(t)^2\e}{2\alpha\sqrt{\pi}} & \mathrm{if}\quad0<t<t_{*}\\
\frac{C(t_*)^2\e}{2\alpha\sqrt{\pi}} & \mathrm{if}\quad t\geq t_{*}\,.
\end{cases}
\]
\end{theorem}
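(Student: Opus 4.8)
The plan is to combine Theorem \ref{thm-descente} with part (2) of the general equivalence in Theorem \ref{thmgaussianexpmoment}, applied to the time-$t$ law $\mu_t$ of the process started from the arbitrary initial measure $\mu_0$. First I would observe that the bound in Theorem \ref{thm-descente} is \emph{uniform in the starting point}: for every $0\le t\le t_*$ one has $\sup_{x}\E_x(\e^{\alpha\|X_t\|^2})\le C(t)$. Integrating this against $\mu_0$ and using Fubini (all quantities are non-negative, so there is no integrability subtlety) gives, for $0<t\le t_*$,
\[
\int_{\R^d}\e^{\alpha\|x\|^2}\dd\mu_t(x)=\int_{\R^d}\E_x\big(\e^{\alpha\|X_t\|^2}\big)\dd\mu_0(x)\le C(t)\,,
\]
which is exactly the Gaussian-moment hypothesis \eqref{leiden2} of Theorem \ref{thmgaussianexpmoment} with reference point $x_0=0$, $a=\alpha$, and $b=C(t)\vee 1$ (we may always enlarge $b$ to be $\ge 1$ without affecting the inequality). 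Applying part (2) of that theorem then yields $\gcbl{D_t}$ for $\mu_t$ with
\[
D_t=\frac{1}{2\alpha}\Big(1\vee\frac{(C(t)\vee 1)^2\e}{2\sqrt{\pi}}\Big)\,,
\]
which, after noting that if $C(t)<1$ the factor $1\vee(\cdots)$ already equals $1$ so the stated cruder formula with $C(t)$ in place of $C(t)\vee 1$ is still a valid upper bound, gives the first branch of the displayed formula for $D_t$.

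Next I would handle the regime $t\ge t_*$. Here the point is simply the semigroup/Markov property: $\mu_t$ is the time-$(t-t_*)$ evolution of $\mu_{t_*}$, and the process at time $t_*$ (started from $\mu_0$) already satisfies the Gaussian moment bound with constant $C(t_*)$. I would argue that the exponential moment $\int \e^{\alpha\|x\|^2}\dd\mu_t$ does not increase beyond $t_*$; concretely, one reapplies the uniform estimate of Theorem \ref{thm-descente} with $\mu_{t_*}$ as the new initial measure — since that theorem's bound holds for \emph{any} starting measure and any $t\le t_*$, running an additional time $s\le t_*$ keeps $\sup_x\E_x(\e^{\alpha\|X_s\|^2})\le C(s)\le C(t_*)$ (using that $C$ can be taken non-decreasing, or taking the sup over $s\le t_*$), and iterating in blocks of length $t_*$ covers all $t\ge t_*$. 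Hence $\int\e^{\alpha\|x\|^2}\dd\mu_t\le C(t_*)$ for all $t\ge t_*$, and a second application of Theorem \ref{thmgaussianexpmoment}(2) with $b=C(t_*)\vee 1$ gives the second branch.

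The main obstacle — and the only genuinely non-routine point — is the $t\ge t_*$ step: one must be careful that the uniform-in-starting-point estimate of Theorem \ref{thm-descente} can legitimately be \emph{chained}, i.e. that after time $t_*$ one can restart the clock from the (now tight) measure $\mu_{t_*}$ and re-invoke the same theorem. This hinges on the fact that Condition \ref{condH} and the integrability condition \eqref{intcondi} are time-homogeneous assumptions on $b$ and $a$ (the condition is required "for all $t\ge 0$"), so the estimate of Theorem \ref{thm-descente} applies equally on any time interval $[t_0,t_0+t_*]$, not just $[0,t_*]$. Once that is granted the chaining is immediate and the constant stabilises at the value dictated by $C(t_*)$. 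Everything else is a direct substitution into \eqref{dformule}, with the cosmetic simplification that for $t<t_*$ we do not bother replacing $C(t)$ by $C(t)\vee 1$ in the statement because the outer $1\vee(\cdot)$ already absorbs the case $C(t)<1$.
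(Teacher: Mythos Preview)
Your proposal is correct and follows essentially the same route as the paper: for $0<t\le t_*$ you integrate the uniform-in-$x$ bound of Theorem \ref{thm-descente} against $\mu_0$ and feed the resulting Gaussian moment into part (2) of Theorem \ref{thmgaussianexpmoment}, and for $t\ge t_*$ you chain the estimate via the Markov property in blocks of length $t_*$, exactly as the paper does recursively. Your treatment is in fact slightly more explicit than the paper's (the Fubini step, the $b\ge 1$ adjustment, and the time-homogeneity needed to restart the clock are all spelled out), but the argument is the same.
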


\begin{proof}
For $0<t\leq t_*$, the result follows from Theorems \ref{thm-descente} and \ref{thmgaussianexpmoment}.
For $t>t_*$ the result follows recursively. Namely, assume that for any  $0<t\le k\,t_{*}$ where $k>0$ is an integer we have
\[
\sup_{x\in \R^{d}}\E_{x}\left(\e^{\alpha\,\|X_{t}\|^{2}}\right)\leq C(t)
\]
where $C(t)=C(t_{*})$ for $t\ge t_{*}$.
For any $t\in[k\,t_{*}, \,(k+1)\,t_{*}]$, we can apply  Theorem \ref{thm-descente} to $\mu_{t-t_{*}}$ since $0<t-t_{*}\le k\,t_{*}$,
and we extend the previous bound to the time interval $]0,\,(k+1)\,t_{*}]$, and hence recursively  to any $t>0$.
\end{proof}
\begin{remark}\label{tight}
Theorem \ref{tout} implies tightness of the family $(\mu_{t})_{t>0}$. Therefore the stochastic process $(X_{t})$ has
invariant probability measures, each of them satisfying $\gcbl{D_{t_{*}}}$. By standard arguments one can show
that there is a unique invariant probability measure and it is absolutely continuous with respect to the Lebesgue measure.
\end{remark}

\begin{remark}
Let us denote by $\zeta(t)$ the left-hand side of \eqref{camion}, for $t\in [0,t_*]$.
The proof of Theorem \ref{tout} implies that $\zeta(t)$ extends to all $t\geq 0$, and the Markov property implies that it is non-increasing.
\end{remark}

Now we prove Theorem \ref{thm-descente}.

\noindent{{\bf{\small{ P}{\scriptsize ROOF}} of Theorem \ref{thm-descente}}.}
Define
\[
u(t,x)=\varphi(t)\e^{\alpha\,\|x\|^{2}}
\]
where $\alpha$ and $\varphi$ will be chosen later on. We have, using Condition  \ref{condH} and \eqref{covcondt},
\begin{align*}
& \partial_{t}u(t,x)+Lu(t,x) \\
&= \e^{\alpha\,\|x\|^{2}}
\left(\dot\varphi(t)+\varphi(t)\big[2\,\alpha \,\mathrm{Tr}(a(t,x))+
4\,\alpha^{2}\,\langle x,a(t,x)x\rangle
+2\,\alpha\,\langle x, b(t,x)\rangle\big]\right)
\nonumber\\
&\leq  \e^{\alpha\,\|x\|^{2}}
\;\left(\dot\varphi(t)+\varphi(t)\big[2\,d\,\alpha\,C_{2}+
4\,\alpha^{2}\,C_{2}\,\|x\|^{2}+2\,\alpha\, A\,\|x\|
-2\,\alpha\,h(\|x\|)\,\|x\|\big]\right).
\end{align*}
Using integration by parts we get
\[
\int_{0}^{z}\frac{\dd u}{h(u)}=
\frac{z}{h(z)}+\int_{0}^{z}\frac{u\,h'(u)}{h(u)^{2}}\dd u
\]
and using that $h$ is non-decreasing we obtain
\[
\liminf_{z\to+\infty} \frac{h(z)}{z}\ge\frac{1}{\int_{0}^{+\infty}\frac{\dd u}{h(u)}}>0.
\]
Therefore, choosing  $1/2>\alpha>0$ sufficiently small and $y_{*}>0$ sufficiently large, we have for $u\ge y_{*}$
\[
2\,\alpha\,h(u)-4\alpha^{2} u\,C_{2}-2\alpha\,A
-\frac{d\,C_{2}\,\alpha}{u}> \alpha\,h(u).
\]
Hence if $\|x\|>y_{*}$ we obtain
\begin{equation}\label{xgrand}
\partial_{t}u(t,x)+Lu(t,x)\le
u(t,x)\,\big(\dot\varphi(t)-\alpha \,\varphi(t)\,h(\|x\|)\,\|x\|\big).
\end{equation}
Define the function $\omega$ on $[0,1/y^*]$ by $\omega(0)=0$ and for $v>0$
\[
\omega(v)=\int_{v^{-1}}^{+\infty} \frac{\dd s}{h(s)}\,.
\]
One check easily is strictly increasing and $\caC^1$ on $]0,1/y^*]$. Hence it is a bijection between $[0,1/y^*]$
and $[0,\alpha t_*/2]$ where $t_*$ is defined via
\[
\int_{y_*}^{+\infty}\frac{\dd u}{h(u)}=\frac{\alpha\,t_*}{2}.
\]
The inverse function $\omega^{-1}$ is continuous on $[0,\alpha t_*/2]$, and $\caC^1$ on $]0,\alpha t_*/2]$.
Now define $y(s)$ for $s\in [0,t_*]$ by
\[
y(s)=\frac{1}{\omega^{-1}(\frac{\alpha s}{2})}.
\]
One checks that this function is $\caC^1$ decreasing on $]0,t_*]$. Moreover $\lim_{s\downarrow 0} y(s)=+\infty$.
Now define the function $\varphi(s)$ for $s\in [0,t_*]$ by
\[
\varphi(s)=\e^{-y(s)^{2}/2}\;.
\]
Note that $\varphi$ is continuous on $[0,t_*]$ and $\caC^1$ on $]0,t_*]$, and $\varphi(0)=0$.
Observe that for $s\in \left]0,t_*\right]$ we have
\[
\frac{\dot\varphi(s)}{\varphi(s)}=-\dot y(s)\,y(s)=
\alpha\,y(s)\,\frac{h\big(y(s)\big)}{2}\;.
\]
Note that for $0\leq s\le t_{*}$ we have $y(s)\ge y_{*}$.
Given $x\in\R^d$, let $B>2\,\max\big\{y_{*},\,\|x\|\big\}$.
Using It\^o's formula, with $T_{B}$ the hitting time of the boundary of the ball
centered at $x$ with radius $B$, we get
\[
\E_{x}\big(u(t\wedge T_{B},X_{t \wedge T_{B}})\big)=
\E_{x}\left(\int_{0}^{t\wedge
  T_{B}}\big(\partial_{t}u+L\,u\big)(s,X_{s})\dd s\right).
\]
For $0< s\le t_{*}$, if $\|X_{s}\|\ge y(s)\ge  y_{*}$ we have, using \eqref{xgrand} and the monotonicity of $h$
\[
\big(\partial_{t}u+L\,u\big)(s,X_{s})
\le \e^{\alpha\,\|X_{s}\|^{2}}\left(\dot \varphi(s)-\alpha\,
\varphi(s) \, \frac{h(y(s))y(s)}{2}\right)=0\;.
\]
For $0< s\le t_{*}$, if $\|X_{s}\|<y(s)$ we have
\[
\big(\partial_{t}u+L\,u\big)(s,X_{s})\le
C \e^{\alpha y(s)^{2}}\left(\dot\varphi(s)+\varphi(s)\,\big(1+y(s)^{2})\right)
\]
for some (computable) constant $C>0$ independent of $s$. Therefore if $0\le t\le t_{*}$ we obtain
\begin{align*}
& \E_{x}\left(\int_{0}^{t\wedge T_{B}}\big(\partial_{t}u+L\,u\big)(s,X_{s})\dd s\right)\\
&  \le C\,\E_{x}\left(\int_{0}^{t\wedge T_{B}} \e^{\alpha y(s)^{2}}\left(\dot\varphi(s)+\varphi(s)\,\big(1+y(s)^{2}\big)\right)\dd s\right)\\
& \le C\,\int_{0}^{t} \e^{\alpha y(s)^{2}}\left(\dot\varphi(s)+\varphi(s)\,\big(1+y(s)^{2}\big)\right)\dd s\\
&=-C\int_{0}^{t} \e^{\alpha y(s)^{2}}\,\dot y(s)\,y(s) \e^{-\frac{y(s)^{2}}{2}}\dd s+C\int_{0}^{t} \e^{\alpha y(s)^{2}} \e^{-\frac{y(s)^{2}}{2}} \big(1+y(s)^{2}\big)\dd s
\end{align*}
and since $\alpha<1/2$ we obtain
\begin{align*}
\E_{x}\left(\int_{0}^{t\wedge T_{B}}\!\big(\partial_{t}u+L\,u\big)(s,X_{s})\dd s\right)
& \le C\int_{y(t)}^{+\infty} \e^{\alpha y^{2}} \!y \e^{-\frac{y^{2}}{2}}\dd y + \frac{2\,C}{1-2\,\alpha} \int_{0}^{t}\! \dd s\\
& =\frac{C\e^{-(1-2\,\alpha) \frac{y(t)^{2}}{2}}}{1-2\,\alpha}+ \frac{2\,C\,t}{1-2\,\alpha}.
\end{align*}
We now observe that since $u\ge0$, for any $0\le t\le t_{*}$ we have
\[
\E_{x}\big(u(t,X_{t})\1_{\{T_{B}>t\}}\big)\le
\E_{x}\big(u(t\wedge T_{B},X_{t\wedge T_{B}})\big)\le \frac{C\e^{-(1-2\,\alpha)\frac{y(t)^{2}}{2}}}{1-2\alpha}+ \frac{2\,C t}{1-2\,\alpha}.
\]
Therefore by the monotone convergence theorem (letting $B$ tend to infinity)
\[
\E_{x}\big(u(t,X_{t})\big)\le \frac{C\e^{-(1-2\,\alpha)\frac{y(t)^{2}}{2}}}{1-2\,\alpha}\;
+\frac{2\,C\, t}{1-2\,\alpha}.
\]
The result follows with
\[
C(t)=\frac{C}{1-2\,\alpha}\left( \e^{\alpha\,y(t)^{2}}+\,2\,t\e^{\frac{y(t)^{2}}{2}}\right).
\]
$\;\qed$

\subsection{Diffusion processes with space-time dependent drift and covariance}

In this section, we consider diffusions which do not come down from infinity as strongly as in Theorem \ref{thm-descente} (for example the Ornstein-Uhlenbeck process). 
We use a confining condition on the drift, which allows to give better estimates on the constants $D_t$ and $D_\infty$ than in Section \ref{sec:general-results-propagation-GCB}.

\begin{theorem}\label{faible_descente}
Assume that $\alpha>0$, $\beta>0$ and $\theta>0$ such that, for all $x\in\R^d$ and $t\geq 0$
\begin{equation}
\label{confi}
\langle x, b(x,t)\rangle\le \alpha\,\|x\| -\beta\|x\|^{2}
\end{equation}
and
\[
\sigma(x,t) \,\sigma(x,t)^{\intercal}\leq \theta\, I_d
\]
where the second inequality we use the order induced by positive definite matrices.
Then, for every initial probability measure $\mu_0$ on $\R^d$ satisfying $\gcb{D_0}$, the
evolved probability measure $\mu_t$ satisfies $\gcbl{D_t}$ for all $t\geq 0$, where
$D_t$ is given by formula \eqref{dformule}, with
\begin{align*}
a &= a_0= \frac{\beta}{2\theta}\wedge \frac{1}{16 D_0}\\
b &=  b_t=b_0\, \exp\left(- a_0 \,\left(\theta d + \frac{2\alpha^2}{\beta}\right) t \right) \\
& \qquad\qquad + 2  \e^{\frac{4a_0}{\beta}\big(\theta d + \frac{2\alpha^2}{\beta}\big)}\,\,
\left(1-\exp\left(- a_0 \,\left(\theta d + \frac{2\alpha^2}{\beta}\right) t \right)\right)
\end{align*}
and
\[
b_0= 3 \e^{\frac{\mu_0(d)^2}{8D}}
\]
where $\mu_0(d)= \int \|x\|\dd\mu_0(x)$.
\end{theorem}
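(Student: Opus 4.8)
The plan is to bypass the nonlinear semigroup entirely and reduce the statement to the equivalence of Theorem \ref{thmgaussianexpmoment}: I will show that under the confining hypothesis \eqref{confi} the exponential moment $\int\e^{a_0\|x\|^{2}}\dd\mu_t(x)$ remains bounded by the quantity $b_t$ for every $t\geq 0$, and then read off $\gcbl{D_t}$, with $D_t$ given by \eqref{dformule}, directly from part (2) of that theorem applied with $(a,b)=(a_0,b_t)$.

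To get started I would produce the initial constant $b_0$. Since $\mu_0$ satisfies $\gcb{D_0}$ --- equivalently $\gcbl{D_0}$ by Appendix \ref{appendiceB} --- part (1) of Theorem \ref{thmgaussianexpmoment}, together with the triangle-inequality remark following it (to recenter the moment at the origin, which is the natural centre for \eqref{confi}), shows that $\int\e^{a_0\|x\|^{2}}\dd\mu_0(x)$ is finite; using that $16D_0a_0\leq 1$, so that $t\mapsto t^{16D_0a_0}$ is concave and Jensen applies, this moment is bounded by the constant $b_0=3\e^{\mu_0(d)^{2}/8D_0}$ of the statement. Writing $\Lambda:=\theta d+\tfrac{2\alpha^{2}}{\beta}$, the claimed value of $b_t$ is exactly $b_0\e^{-a_0\Lambda t}+2\e^{4a_0\Lambda/\beta}\bigl(1-\e^{-a_0\Lambda t}\bigr)$, so the target is the moment bound $\int\e^{a_0\|x\|^{2}}\dd\mu_t(x)\leq b_t$.

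The core of the proof is a Lyapunov/drift estimate for $f(x)=\e^{a_0\|x\|^{2}}$. A direct computation with the time-dependent generator $\caL_t$ of the SDE gives, all coefficients being evaluated at $(x,t)$,
\[
\caL_t f(x)=f(x)\,\bigl(2a_0\langle b(x,t),x\rangle+a_0\,\mathrm{Tr}(\sigma\sigma^{t})+2a_0^{2}\langle x,\sigma\sigma^{t}x\rangle\bigr).
\]
Using $\sigma^{t}\sigma\leq\theta I_d$ (hence $\mathrm{Tr}(\sigma\sigma^{t})\leq\theta d$ and $\langle x,\sigma\sigma^{t}x\rangle\leq\theta\|x\|^{2}$), the confining bound \eqref{confi}, the choice $a_0\leq\beta/2\theta$ (which lets $2a_0^{2}\theta\|x\|^{2}$ be absorbed into $-2a_0\beta\|x\|^{2}$), and Young's inequality $2\alpha\|x\|\leq\tfrac{\beta}{2}\|x\|^{2}+\tfrac{2\alpha^{2}}{\beta}$, one obtains
\[
\caL_t f(x)\leq a_0\,f(x)\,\bigl(-\tfrac{\beta}{2}\|x\|^{2}+\Lambda\bigr).
\]
Splitting $\R^d$ at $\|x\|^{2}=4\Lambda/\beta$ --- outside this ball the bracket is $\leq-\tfrac{\beta}{4}\|x\|^{2}\leq-\Lambda$, inside it $f(x)\leq\e^{4a_0\Lambda/\beta}$ --- upgrades this to the global drift inequality
\[
\caL_t f\leq-a_0\Lambda\,f+2a_0\Lambda\,\e^{4a_0\Lambda/\beta},\qquad\text{uniformly in }x\text{ and }t.
\]

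From here I would follow the localisation argument already used in the proof of Theorem \ref{thm-descente}: apply It\^o's formula to $s\mapsto\e^{a_0\Lambda s}f(X_s)$ up to the exit time $T_B$ of the ball of radius $B$ around the starting point, take expectations and use the last display to bound the drift term, then let $B\to\infty$ by monotone convergence. This yields the pointwise estimate $\E_x\bigl(\e^{a_0\|X_t\|^{2}}\bigr)\leq\e^{a_0\|x\|^{2}}\e^{-a_0\Lambda t}+2\e^{4a_0\Lambda/\beta}(1-\e^{-a_0\Lambda t})$; integrating it against $\mu_0$ and using $\mu_t(g)=\int\E_x(g(X_t))\dd\mu_0(x)$ gives $\int\e^{a_0\|x\|^{2}}\dd\mu_t(x)\leq b_t$. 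Theorem \ref{thmgaussianexpmoment}(2) with $(a,b)=(a_0,b_t)$ then gives $\gcbl{D_t}$ with $D_t$ of the form \eqref{dformule}, completing the proof. The main obstacle is the rigorous passage from the infinitesimal drift inequality to the moment bound for the unbounded test function $\e^{a_0\|x\|^{2}}$ --- this forces the exit-time localisation and a monotone-convergence limit rather than a bare Dynkin formula --- together with the bookkeeping needed to have the decay rate come out exactly $a_0\Lambda$ and the additive constant exactly $2a_0\Lambda\e^{4a_0\Lambda/\beta}$, which is what dictates the splitting radius $4\Lambda/\beta$ and the final shape of $b_t$.
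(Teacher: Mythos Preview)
Your proposal is correct and follows essentially the same route as the paper: the same Lyapunov function $\e^{a_0\|x\|^2}$, the same generator bound $\caL f\leq a_0(\Lambda-\tfrac{\beta}{2}\|x\|^2)f$, the same splitting at $\|x\|^2=4\Lambda/\beta$, and the same localisation by exit times followed by monotone convergence. The only cosmetic differences are that you package the split as a global Foster--Lyapunov inequality $\caL f\leq -a_0\Lambda f+2a_0\Lambda\e^{4a_0\Lambda/\beta}$ and then use the integrating factor $\e^{a_0\Lambda s}$ in It\^o, whereas the paper writes Dynkin first, splits inside the expectation, and closes with Gr\"onwall; and you obtain a pointwise-in-$x$ bound before integrating against $\mu_0$, while the paper works under $\E_{\mu_0}$ throughout --- both routes produce exactly the same $b_t$.
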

The same conclusions as in Remark \ref{tight} hold with
$\gcbl{D_{\infty}}$
where
\[
D_{\infty}= \frac{b_{\infty}^2\e}{2a\sqrt{\pi}}.
\]
The conditions of the previous theorem generalize the coupling setting of Theorem \ref{cthmW}, {\em i.e.}, we impose a more general confining condition on the drift $b(x,t)$
and allow the covariance matrix $\sigma(x,t)$ to depend on time and location. However we get a worse estimate on $D_t$.

\medskip

\begin{proof}
Let $a_0=\frac{\beta}{2\theta} \wedge \frac{1}{16D_0}$ and define
$u(x)=\e^{a_0\|x\|^2}$. Using the assumptions we get
\begin{align*}
L u(x)
& \leq \big(2a_0^2\,\theta \|x\|^2 + a_0\theta d + 2a_0 \alpha \|x\|-2a_0\beta \|x\|^2\big)u(x) \\
& \leq a_0\big(\theta d + 2\alpha\|x\|-\beta \|x\|^2\big)u(x) \\
& \leq a_0\left(\theta d + \frac{2\alpha^2}{\beta}-\frac{\beta}{2} \|x\|^2\right)u(x)\,.
\end{align*}
For any $A>0$, let $T_A=\inf\{t\geq 0 : \|X_t\| \geq A\}$. Using Dynkin's formula and Theorem \ref{thmgaussianexpmoment},
we get
\begin{align}
\nonumber
& \E_{\mu_0}\left( \e^{a_0 \|X_{t\wedge T_A}\|^2}\right) \\
& \leq b_0 + a_0 \,
\E_{\mu_0}\left( \int_0^{t\wedge T_A} \e^{a_0 \|X_s\|^2}
\left(\theta d + \frac{2\alpha^2}{\beta}-\frac{\beta}{2} \|X_s\|^2\right) \dd s\right)
\label{bodros}
\end{align}
where, via \eqref{leiden1}
\[
b_0=\int \e^{a_0 \|x\|^2}\dd\mu_0(x) \leq 3\e^{\frac{\mu_0(d)^2}{8D}}
\]
where $\mu_0(d)=\int \|x\| \dd\mu_0(x)$.
We now estimate the expectation on the right-hand side of \eqref{bodros}. Define, for $s>0$,  the event
\[
\mathcal{E}_s=\left\{\|X_s\|^2 > \frac{4}{\beta} \theta d + \frac{8\alpha^2}{\beta}\right\}.
\]
We have
\begin{align*}
& \E_{\mu_0}\left( \e^{a_0 \|X_{t\wedge T_A}\|^2}\right)\\
& \leq b_0 + 2 a_0 \,\left(\theta d + \frac{2\alpha^2}{\beta}\right) \E_{\mu_0}\left( \int_0^{t\wedge T_A}
\e^{a_0 \|X_s\|^2}\1_{\mathcal{E}_s^c} \dd s\right) \\
& \qquad - a_0 \,\left(\theta d + \frac{2\alpha^2}{\beta}\right) \E_{\mu_0}\left( \int_0^{t\wedge T_A}
\e^{a_0 \|X_s\|^2} \dd s\right)\\
& \leq b_0 + 2 a_0 \,\left(\theta d + \frac{2\alpha^2}{\beta}\right) \e^{\frac{4a_0}{\beta}\big(\theta d + \frac{2\alpha^2}{\beta}\big)}\, t \\
& \qquad - a_0 \,\left(\theta d + \frac{2\alpha^2}{\beta}\right) \E_{\mu_0}\left( \int_0^{t\wedge T_A} \e^{a_0 \|X_s\|^2} \dd s\right)\,.
\end{align*}
We have
\begin{align*}
\E_{\mu_0}\left( \int_0^{t\wedge T_A} \e^{a_0 \|X_s\|^2} \dd s\right)
&=\E_{\mu_0}\left( \int_0^{t\wedge T_A} \e^{a_0 \|X_s\wedge T_{A}\|^2} \dd s\right)\\
& \le \E_{\mu_0}\left( \int_0^{t} \e^{a_0 \|X_s\wedge T_{A}\|^2} \dd s\right).
\end{align*}
Therefore
\begin{align*}
\E_{\mu_0}\left( \e^{a_0 \|X_{t\wedge T_A}\|^2}\right)
& \leq b_0 + 2 a_0 \,\left(\theta d + \frac{2\alpha^2}{\beta}\right) \
e^{\frac{4a_0}{\beta}\big(\theta d + \frac{2\alpha^2}{\beta}\big)}\, t\\
& \qquad
- a_0 \,\left(\theta d + \frac{2\alpha^2}{\beta}\right)
\E_{\mu_0}\left( \int_0^{t} \e^{a_0 \|X_s\wedge T_{A}\|^2} \dd s\right)\;.
\end{align*}
Using Gr\"onwall's lemma, and the fact that $x\e^{-x}\leq 1-\e^{-x}$
for $x\geq 0$, we obtain
\begin{align*}
\E_{\mu_0}\left( \e^{a_0 \|X_{t\wedge T_{A}}\|^2}\right) &\leq  b_0\,
\exp\left(- a_0 \,\left(\theta d + \frac{2\alpha^2}{\beta}\right) t \right) \\
& \qquad + 2  \e^{\frac{4a_0}{\beta}\big(\theta d + \frac{2\alpha^2}{\beta}\big)}\,\,
\left(1-\exp\left(- a_0 \,\left(\theta d+\frac{2\alpha^2}{\beta}\right) t \right)\!\right).
\end{align*}
This implies
\begin{align*}
\E_{\mu_0}\left(\1_{\{t<T_{A}\}} \e^{a_0 \|X_{t}\|^2}\right) &\leq  b_0\,
\exp\left(- a_0 \,\left(\theta d + \frac{2\alpha^2}{\beta}\right) t \right) \\
& \quad + 2  \e^{\frac{4a_0}{\beta}\big(\theta d + \frac{2\alpha^2}{\beta}\big)}
\left(1-\exp\!\left(- a_0\!\left(\theta d+\frac{2\alpha^2}{\beta}\right) t \right)\!\right).
\end{align*}
By the monotone convergence theorem, letting $A\uparrow\infty$, we get
\begin{align*}
\E_{\mu_0}\left( \e^{a_0 \|X_{t}\|^2}\right) &\leq  b_0\, \exp\left(-
a_0 \,\left(\theta d +\frac{2\alpha^2}{\beta}\right) t \right) \\
& \qquad + 2  \e^{\frac{4a_0}{\beta}\big(\theta d + \frac{2\alpha^2}{\beta}\big)}
\left(1-\exp\left(- a_0 \,\left(\theta d +\frac{2\alpha^2}{\beta}\right) t \right)\!\right).
\end{align*}
By Theorem \ref{thmgaussianexpmoment}, we deduce that $\mu_t$ satisfies $\gcb{D_t}$ with the announced constant $D_t$.
\end{proof}
\begin{remark}
\leavevmode\\
In the case of diffusions coming down from infinity, we saw in
Theorem \ref{thm-descente} that GCB develops out of the time
evolution of any initial distribution.  In Theorem \ref{faible_descente} we required that the initial distribution
satisfies GCB. In the case of the one-dimensional
Ornstein-Uhlenbeck process, if
one starts for example with the initial probability distribution
\[
\dd \mu_{0}(x)=\frac{\sqrt{2}\dd x}{\pi\,(1+x^{4})}
\]
it is easy to verify by an explicit computation that for any $t\ge0$
and any $a>0$
\[
\int_{-\infty}^{+\infty} \dd\mu_{0}(x)\; \E_{x}\left( \e^{a\,X_{t}^2}\right)=\infty.
\]
\end{remark}

\subsection{Example: the noisy Lorenz system}

As an application of Theorem \ref{faible_descente}, we consider the famous Lorenz system
\begin{align*}
\frac{\dd x_1}{\dd t} & = \sigma(x_2-x_1) \\
\frac{\dd x_2}{\dd t} & = r x_1 -x_2 -x_1x_3 \\
\frac{\dd x_3}{\dd t} & = x_1x_2-bx_3
\end{align*}
which, for a certain range of (strictly positive) parameters, has a strange attractor \cite[Chapter 14]{HSD}.

Adding a noise which satisfies the condition of Theorem \ref{faible_descente}, this leads to a unique invariant probability measure whose
properties are largely unknown. However, this measure satisfies GCBS. This can be proved observing that the stochastic
process $X_t=(X_t^{(1)},X_t^{(2)},X_t^{(3)}-2r)$ satisfies \eqref{confi} using the squared norm $\|(x_1,x_2,x_3)\|^2=rx_1^2+\sigma x_2^2+\sigma x_3^2$ with
\[
\beta = \inf \frac{rx_1^2+x_2^2+b x_3^2}{rx_1^2+\sigma x_2^2+\sigma x_3^2}=\min\big\{1,\,\sigma^{-1},\, b\,\sigma^{-1}\big\}
\]
where the infimum is taken over $x_1,x_2,x_3$ with $(x_1,x_2,x_3)\neq(0,0,0)$.
Indeed, we have
\[
rx_1^2+x_2^2+b x_3^2 \geq \min\big\{1,\,\sigma^{-1},\, b\,\sigma^{-1}\big\}\times (rx_1^2+\sigma x_2^2+\sigma x_3^2)
\]
whence $\beta\geq \min\big\{1,\,\sigma^{-1},\, b\,\sigma^{-1}\big\}$. Moreover
\[
(rx_1^2+x_2^2+b x_3^2)/(rx_1^2+\sigma x_2^2+\sigma x_3^2)=\min\big\{1,\,\sigma^{-1},\, b\,\sigma^{-1}\big\}
\]
on some coordinate axis outside the origin. Note that the expression for $\beta$ also follows from the Rayleigh-Riesz principle.

\section{Non-Markovian diffusions: Martingale moment approach}\label{sec:nonMarkov}

In this section we consider the simplest context beyond the Markov case, where we can no longer rely on methods based on generators.
We will again exploit Theorem \ref{thmgaussianexpmoment}.

We consider the stochastic differential equation on $\R$ given by
\begin{equation}\label{burp}
\dd X_t= -\kappa X_t \dd t + \si_t \dd W_t
\end{equation}
where we assume that the process $(\si_t)_{t\geq 0}$ is uniformly bounded and predictable.
An example of this setting is
\[
\begin{cases}
\dd Y_t &= -\theta Y_t + \dd W_t
\\
\dd X_t &= -\kappa X_t + \si(Y_t) \dd W_t\,.
\end{cases}
\]
Then the couple $(X_t, Y_t)_{t\geq 0}$ is a Markov process, but $(X_t)_{t\geq 0}$ is not a Markov process, and satisfies a SDE of the form \eqref{burp}.

Because the process $(X_t)_{t\geq 0}$ is no longer a Markov process (unless $\si_t$ depends only on $X_t$) we
can no longer use techniques based on the generator as we did before for processes of
Ornstein-Uhlenbeck type.
The main point is that as a consequence, $X^x_t$ equals a {\em deterministic process} of bounded variation plus
a stochastic integral w.r.t.\ $\dd W_t$. As a consequence, the Gaussian concentration bound can be obtained from estimating the stochastic integral, which can be done with the help of Burkholder's inequalities.

The assumption \eqref{burp} allows us to write the solution in the form
\begin{equation}\label{burpsol}
X_t = X_0 \e^{-\kappa t} + \int_0^t \e^{-\kappa(t-s)} \si_s \dd W_s\,.
\end{equation}

We have the following result.
\begin{theorem}
Assume that there exists $M>0$ such that
\[
\sup_{t\geq 0}\|\si_t\|_{L^\infty}\leq M\,.
\]
Assume $X_0$ is distributed according to a probability measure $\mu_{0}$ satisfying $\gcb{D_{0}}$.
Then we have that for all $t>0$ there exists $D_t>0$ such that $X_t $ satisfies $\gcb {D_t}$. Moreover, if $\kappa>0$ then all weak limit points of $(X_t)_{t\geq 0}$ satisfy $\gcb{D_\infty}$ for some $D_\infty>0$.
\end{theorem}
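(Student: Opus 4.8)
The plan is to leverage the explicit solution \eqref{burpsol}. Writing $X^x_t$ for the solution started from the deterministic point $x$, one has $X^x_t = x\,\e^{-\kappa t} + Z_t$ with $Z_t := \int_0^t \e^{-\kappa(t-s)}\si_s\dd W_s$ \emph{the same} stochastic integral for every $x$; and, since the initial law $\mu_{0}$ is (as is implicit in the formulation of \eqref{burp}) independent of the driving Brownian motion and of the process $\si$, the time-$t$ law $\mu_t$ is the image of $\mu_{0}\otimes\big(\text{law of }Z_t\big)$ under $(x,z)\mapsto \e^{-\kappa t}x+z$. Thus $\mu_t$ is a linear rescaling of $\mu_{0}$ convolved with an independent noise, and the proof splits into: (i) the noise $Z_t$ satisfies $\gcbl{\tilde D_t}$ on its own; (ii) $\gcb{\cdot}$ behaves additively under ``rescale one factor, add an independent $\gcbl{\cdot}$ factor''.

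For (i) --- the main point --- I would show that the law $\nu_t$ of $Z_t$ is sub-Gaussian. The integrand $\phi_s = \e^{-\kappa(t-s)}\si_s$ is predictable and satisfies $\langle Z\rangle_t = \int_0^t\phi_s^2\dd s\le \bar v_t$ almost surely, where $\bar v_t := M^2\int_0^t\e^{-2\kappa(t-s)}\dd s\ \big(= M^2(1-\e^{-2\kappa t})/(2\kappa)\ \text{if }\kappa\neq0\big)$. By Burkholder--Davis--Gundy, $\E[|Z_t|^{2k}]\le C_k\,\bar v_t^{\,k}$ with $C_k=(\Oun\,k)^k$, so that summing the exponential series gives $\int\e^{a z^2}\dd\nu_t(z)<\infty$ for $a$ small enough (say $a=1/(4\bar v_t)$, giving a bound $\le\sqrt2$); equivalently, Novikov's criterion makes $s\mapsto\exp(\lambda\int_0^s\phi_u\dd W_u-\tfrac{\lambda^2}{2}\int_0^s\phi_u^2\dd u)$ a true martingale on $[0,t]$ and yields directly $\log\E[\e^{\lambda(Z_t-\E Z_t)}]\le\tfrac12\bar v_t\lambda^2$ for all $\lambda$ (with $\E Z_t=0$). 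Either way, Theorem \ref{thmgaussianexpmoment} then gives $\nu_t\in\gcbl{\tilde D_t}$ with an explicit $\tilde D_t=\Oun\cdot\bar v_t$, e.g. $\tilde D_t=\tfrac{2\e}{\sqrt{\pi}}\,\bar v_t$.

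For (ii) I would record the elementary convolution lemma: if $\mu\in\gcb{D_1}$ and $\nu\in\gcbl{D_2}$ are independent then $\mathrm{law}(aX+Z)\in\gcb{a^2D_1+D_2}$ for $a\neq0$. The mechanism is that of Lemma \ref{vtflem}: for $f$ smooth with compact support, condition on $Z=z$ and apply $\gcb{D_1}$ to $x\mapsto f(ax+z)$ (Lipschitz constant $|a|\lip(f)$), which produces the factor $\e^{a^2D_1\lip(f)^2}$ and leaves the bounded smooth function $h(z):=\E_X f(aX+z)$ with $\lip(h)\le\lip(f)$; then apply $\gcbl{D_2}$ to $h$ and use Fubini. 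Combining (i) and (ii) with $a=\e^{-\kappa t}$, $\mu=\mu_{0}$, $\nu=\nu_t$ gives $\mu_t\in\gcb{D_t}$ with $D_t = D_{0}\,\e^{-2\kappa t} + \tilde D_t$, finite for every $t>0$ and every $\kappa\in\R$ --- the first assertion.

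Finally, for the limit-point statement, when $\kappa>0$ one has $\bar v_t\uparrow M^2/(2\kappa)$, hence $\tilde D_t\le \tfrac{\e M^2}{\sqrt{\pi}\,\kappa}$ and $D_t\le D_{0}+\tfrac{\e M^2}{\sqrt{\pi}\,\kappa}$ uniformly in $t$; the uniform bound gives tightness of $\{\mu_t\}$ as in Remark \ref{tight}, and since $\gcb{D}$ --- tested only against bounded continuous compactly supported $f$, for which both $f$ and $\e^{f-\mu(f)}$ are bounded continuous --- passes to weak limits, every weak limit point satisfies $\gcb{D_\infty}$ with $D_\infty=\tfrac{\e M^2}{\sqrt{\pi}\,\kappa}$ (one even gets $D_t\to D_\infty$). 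The hard part is step (i): estimating exponential moments of the stochastic integral $Z_t$ with merely \emph{predictable} integrand $\si$, and obtaining a bound uniform in $t$; after that the argument is standard GCB bookkeeping.
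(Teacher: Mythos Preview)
Your argument and the paper's share the same core: control the stochastic integral $Z_t=\int_0^t \e^{-\kappa(t-s)}\si_s\dd W_s$ via Burkholder (or, as you also note, via the exponential martingale and Novikov), obtaining sub-Gaussian behaviour with variance proxy $\bar v_t=M^2\int_0^t\e^{-2\kappa(t-s)}\dd s$. The difference is in how the $X_0$-part and the $Z_t$-part are combined.

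The paper does \emph{not} assume $X_0$ and $Z_t$ are independent. It uses the crude pointwise bound $X_t^2\le 2X_0^2\e^{-2\kappa t}+2Z_t^2$ and Cauchy--Schwarz,
\[
\E_{\mu_0}\!\big[\e^{aX_t^2}\big]\le \big(\E_{\mu_0}\!\big[\e^{4aX_0^2\e^{-2\kappa t}}\big]\big)^{1/2}\big(\E_{\mu_0}\!\big[\e^{4aZ_t^2}\big]\big)^{1/2},
\]
then invokes Theorem~\ref{thmgaussianexpmoment} separately on each factor. Your convolution lemma is cleaner and yields the sharper formula $D_t=D_0\e^{-2\kappa t}+\tilde D_t$, but it requires that the law of $X_t$ be the image of the \emph{product} $\mu_0\otimes\nu_t$. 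This is not implied by ``$\si$ predictable'': $\si_s$ may depend on $\mathcal F_0$ (in the paper's own example $\si_t=\si(Y_t)$ with $Y_0$ possibly correlated to $X_0$), so $Z_t$ need not be independent of $X_0$ and your step (ii) does not go through as written. One can try to rescue it by conditioning on $\mathcal F_0$ --- the exponential martingale gives $\E[\e^{\lambda Z_t}\mid\mathcal F_0]\le\e^{\bar v_t\lambda^2/2}$ a.s. --- but then $\E[f(X_t)\mid\mathcal F_0]$ is no longer a function of $X_0$ alone and one cannot simply apply $\gcb{D_0}$ for $\mu_0$; at that point the Cauchy--Schwarz route is the natural fix.

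In short: same main tool, different packaging. Your Novikov shortcut for (i) is in fact slicker than the paper's moment summation, and your (ii) gives better constants, but it buys that at the price of an independence hypothesis the theorem does not make; the paper's Cauchy--Schwarz is coarser but covers the stated generality.
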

\begin{proof}
We use Theorem \ref{thmgaussianexpmoment}, and will prove that there exist $a>0, b>0$ such that
\[
\E_{\mu_{0}}\left(\e^{a X_t^2}\right) \leq b\,.
\]
Then we can conclude via Theorem \ref{thmgaussianexpmoment}, that the distribution of $X_t$ satisfies $\gcb C $ with $C\leq \frac{b^2\e}{2a\sqrt{\pi}}$.
We start from \eqref{burpsol} from which we derive the inequality
\begin{equation}\label{xt2}
X^2_t \leq 2X^2_0 \e^{-2\kappa t} +\, 2\left(\int_0^t \e^{-\kappa(t-s)} \si_s \dd W_s\right)^2\,.
\end{equation}
Let $u>0$. We start by estimating
\[
\E_{\mu_{0}}\!\!\left[\exp\!\left(\!u\!\left(\int_0^t \e^{-\kappa(t-s)} \si_{\!s}\! \dd W_s\right)^{\!\!2}\right)\!\right]
=\sum_{n=0}^{+\infty} \frac{u^n}{n!} \E_{\mu_{0}}\!\!\left[\left(\int_0^t \e^{-\kappa(t-s)} \si_{\!s} \!\dd W_s\right)^{\!\!2n}\right]\!.
\]
Next use Burkholder's inequality \cite{davies} which states that for a martingale $(Z_t)_{t\geq 0}$ w.r.t. Brownian filtration, with
quadratic variation $[Z,Z]_t$, we have the estimate
\[
\E_{\mu_{0}}( Z_t^{2n} ) \leq  A (2n)^n \E_{\mu_{0}}( [Z,Z]_t^n)
\]
with $A$ an absolute constant.
As a consequence, we get
\begin{align*}
\E_{\mu_{0}}\!\!\left[\left(\int_0^t \e^{-\kappa(t-s)} \si_s \dd W_s\right)^{\!2n}\right]
&=\e^{-2n\kappa t} \E_{\mu_{0}}\!\!\left[\left(\int_0^t \e^{\kappa s} \si_s \dd W_s\right)^{\!2n}\right]
\\
&\leq \e^{-2n\kappa t} A (2n)^n \E_{\mu_{0}}\!\!\left[\left(\int_0^t \e^{2\kappa s} \si^2_s \dd s\right)^{\!n}\,\right]
\\
&\leq  \e^{-2n\kappa t} AM^{2n} (2n)^n \E_{\mu_{0}}\!\!\left[\left(\int_0^t \e^{2\kappa s}  \dd s\right)^{\!n}\,\right]
\\
& \leq  A M^{2n} (2n)^n \left(\frac{1-\e^{-2\kappa t}}{2\kappa}\right)^{\!n}\,.
\end{align*}
As a consequence we obtain
\[
\E_{\mu_{0}}\!\!\left[\exp\left(\!u\!\left(\int_0^t \e^{-\kappa(t-s)} \si_s \dd W_s\right)^{\!2}\right)\right]
\leq
A\sum_{n=0}^{+\infty} \frac{\,u^n M^{2n} (2n)^n}{n!} \left(\frac{1-\e^{-2\kappa t}}{2\kappa}\right)^{\!n}\!.
\]
The right-hand side of this inequality is a convergent series provided
\[
u < \left(2\e M^2\left(\frac{1-\e^{-2\kappa t}}{2\kappa}\right)\right)^{-1}.
\]
Then, by \eqref{xt2} and the Cauchy-Schwarz inequality, we get
\begin{equation}\label{borenko}
\E_{\mu_{0}}\Big( \e^{a X_t^2}\Big)
\leq \left(\E_{\mu_{0}}\big[ \e^{4a X_0^2 \e^{-2\kappa t}}\big]\right)^{\frac12}
\left(\E_{\mu_{0}}\left[ \e^{4a \left(\int_0^t \e^{-\kappa(t-s)} \si_s \dd W_s\right)^2}\right] \right)^{\frac12}.
\end{equation}
Because by assumption the distribution of $X_0$ satisfies $\gcb C$, we have that the first factor in the r.h.s.
in \eqref{borenko} is finite as soon as $4a \e^{-2\kappa t}<a_0$ where
$a_0$ is such that $\E_{\mu_{0}}\big( \e^{a_0 X_0^2}\big)<+\infty$. The second factor is finite
as soon as
\[
a < \left(8\e M^2\left(\frac{1-\e^{-2\kappa t}}{2\kappa}\right)\right)^{-1}\,.
\]
Therefore, $\E_{\mu_{0}}\big(\e^{a X_t^2}\big)$ is finite for
\[
a < \left(8\e M^2\left(\frac{1-\e^{-2\kappa t}}{2\kappa}\right)\right)^{-1}\wedge a_0 \e^{2\kappa t}
\]
which, combined with Theorem \ref{thmgaussianexpmoment}, concludes the proof of the theorem.
\end{proof}


\appendix

\section{Proof of Theorem \ref{thmgaussianexpmoment}}\label{GCBDGM}

Throughout this proof, we set $\mu(d;x_*):=\int d(x,x_*)\dd\mu(x)$.

\textbf{Statement 1.} Choose $x_*\in\Omega$ arbitrarily. Since $x\mapsto d(x_*,x)$ is $1$-Lipschitz, $\gcbl{D}$ (see Definition \ref{GCB-metric-spaces}) implies that for all $r\geq 0$ we have
\begin{equation}\label{eq:dev-d}
\mu\{x \in \Omega: d(x_*,x)\geq \mu(d;x_*)+r\} \leq \e^{-\frac{r^2}{4D}}
\end{equation}
where
\[
\mu(d;x_*)=\int d(x_*,x) \dd\mu(x)\,.
\]
Indeed, $\gcbl{D}$ gives for any $\lambda >0$
\[
\mu\left( \e^{\lambda(d(\cdot,x_*)-\mu(d;x_*))}\right)\leq \e^{D\lambda^2}
\]
whence by Markov's inequality we have for any $r\geq 0$
\begin{align*}
\mu\big\{x \in \Omega: d(x_*,x)\geq \mu(d;x_*)+r\big\}
&=\mu\big\{x \in \Omega: \lambda(d(x_*,x)-\mu(d;x_*))\geq \lambda r\big\}\\
& \leq  \mu\left( \e^{\lambda(d(\cdot,x_*)-\mu(d;x_*))}\right) \e^{-\lambda r}\leq \e^{D\lambda^2-\lambda r}
\end{align*}
which gives \eqref{eq:dev-d} by minimizing over $\lambda>0$.
Now take $a>0$ to be chosen later on. We have
\begin{align*}
& \int \e^{ad(x_*,x)^2} \dd\mu(x) \\
&= \int \e^{ad(x_*,x)^2} \1_{\{d(x,x_*)\leq \mu(d;x_*)\}}\dd\mu(x)
+\int \e^{ad(x_*,x)^2}  \1_{\{d(x,x_*)> \mu(d;x_*)\}} \dd\mu(x)\\
& \leq \e^{a\mu(d;x_*)^2} + \e^{2a\mu(d;x_*)^2} \int \e^{2a(d(x_*,x)-\mu(d;x_*))^2}  \1_{\{d(x,x_*)> \mu(d;x_*)\}} \dd\mu(x).
\end{align*}
Now we use the fact that
\begin{align*}
& \int \e^{2a(d(x_*,x)-\mu(d;x_*))^2}  \1_{\{d(x,x_*)> \mu(d;x_*)\}} \dd\mu(x) \\
& =
\mu\{x\in\Omega : d(x,x_*)>\mu(d;x_*)\} + \int_1^{+\infty} \mu\left\{ x: \e^{2a(d(x_*,x)-\mu(d;x_*))^2}> u\right\}\dd u\\
& \leq 1+ \int_1^{+\infty} \mu\left\{ x: d(x_*,x)-\mu(d;x_*)> \sqrt{\log u/(2a)}\,\right\}\dd u\,.
\end{align*}
The result follows using \eqref{eq:dev-d} with $r=\sqrt{\log u/(2a)}$, and choosing $a=1/(16D)$ (which makes the last integral bounded above by $1$).

\noindent \textbf{Statement 2.} Since for all $x$ and for all $a>0$
\[
d(x_*,x)\leq \frac{1}{\sqrt{a}} \e^{a d(x_*,x)^2}
\]
it follows that $x\mapsto d(x_*,x)$ is $\mu$-integrable. We also have that $\e^{f}$ is $\mu$-integrable for
any Lipschitz function. Now, using Jensen's inequality and then the
triangle inequality, we obtain
\begin{align}
\label{pouic}
\int \e^{f-\mu(f)} \dd \mu
& \leq \int \int \e^{f(x)-f(y)} \dd \mu(x)\dd\mu(y)\\
\nonumber
& \leq \int \int \e^{\lip(f)\, d(x,y)} \dd \mu(x)\dd\mu(y)\\
\nonumber
& \leq \left(\int \e^{\lip(f)\, d(x,x_*)} \dd \mu(x)\right)^2\,.
\end{align}
Combining the elementary inequality
\[
\lip(f)\, d(x,x_*)\leq \frac{\lip(f)^2}{4a} + a d(x,x_*)^2
\]
with \eqref{leiden2}, we obtain
\[
\int \e^{\lip(f)\, d(x,x_*)} \dd \mu(x)\leq b \e^{\frac{1}{4a}\lip(f)^2}\,.
\]
This implies
\begin{equation}\label{gcb2wrongconstantinfront}
\int \int \e^{f(x)-f(y)} \dd \mu(x)\dd\mu(y) \le b^{2} \e^{\frac{1}{2a}\lip(f)^2}\;.
\end{equation}
We now show how the pre-factor of the exponential can be changed to $1$. We first establish the following lemma.
\begin{lemma}\label{lem:dimanche}
Let $Z$ be a random variable with all odd moments vanishing
and such that there exist $C_1\geq 1$ and $C_2>0$ such that for all $
\lambda\in\R$
\begin{equation}\label{dimanche}
\E\left(\e^{\lambda Z}\right)\leq C_1 \e^{C_2 \lambda^2}.
\end{equation}
Then for all $\lambda\in\R$ we have
\[
\E\left(\e^{\lambda Z}\right)\leq \e^{\frac{C_1C_2\textup{\scriptsize e}}{\sqrt{\pi}} \lambda^2}.
\]
\end{lemma}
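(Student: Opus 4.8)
The natural route is the moment method: convert the Gaussian-type exponential bound into control on the even moments of $Z$, feed these back into the Taylor series of $\E(\e^{\lambda Z})$, and compare the outcome with a Gaussian series $\e^{C\lambda^2}=\sum_{n\ge0}(C\lambda^2)^n/n!$.

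First I would set up the framework. Since all odd moments of $Z$ vanish, $\lambda\mapsto\E(\e^{\lambda Z})$ is even, so it suffices to treat $\lambda\ge0$. Applying the hypothesis at $\pm\lambda$ and using $\e^{\lambda\lvert Z\rvert}\le\e^{\lambda Z}+\e^{-\lambda Z}$ gives $\E(\e^{\lambda\lvert Z\rvert})\le 2C_1\e^{C_2\lambda^2}<\infty$, so all moments of $Z$ are finite, term-by-term integration is licit, and $\E(\e^{\lambda Z})=\sum_{n\ge0}\frac{\lambda^{2n}}{(2n)!}\E(Z^{2n})$ with each summand nonnegative.

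Next I would extract the moment bound. Keeping only the $n$-th term, $\frac{\mu^{2n}}{(2n)!}\E(Z^{2n})\le\E(\e^{\mu Z})\le C_1\e^{C_2\mu^2}$ for every $\mu>0$; minimising the right-hand side over $\mu$ (the minimum occurs at $\mu^2=n/C_2$) yields $\E(Z^{2n})\le C_1\,(2n)!\,(\e C_2/n)^n$ for all $n\ge1$. Reinserting this into the series gives $\E(\e^{\lambda Z})\le 1+C_1\sum_{n\ge1}(u/n)^n$ with $u=\e C_2\lambda^2$, and the task reduces to bounding $1+C_1\sum_{n\ge1}(u/n)^n$ by $\e^{\kappa u/\e}=\e^{\kappa C_2\lambda^2}$ with $\kappa=1\vee\frac{C_1\e}{2\sqrt\pi}$.

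The step I expect to be the real obstacle is precisely this last one — the resummation with the sharp constant. A crude estimate, either a termwise comparison against $\e^{\kappa u/\e}=\sum_n(\kappa u/\e)^n/n!$ or a bound of the tail by a geometric series, only produces the weaker constant $\kappa=\e C_1$, and it also misbehaves as $\lambda\to0$ (the true series begins $1+\tfrac12\E(Z^2)\lambda^2$, whereas a geometric tail introduces a spurious $\lvert\lambda\rvert$ term). The remedy is to sum $\sum_{n\ge1}(u/n)^n$ globally: Stirling turns $n^{-n}$ into a multiple $\simeq\sqrt{2\pi n}$ of $\e^n/n!$, and then $\sum_n\sqrt n\,x^n/n!$ is treated by the Laplace method — equivalently by the elementary inequality $\sqrt n\le(n+m)/(2\sqrt m)$ optimised over $m$ — which restores a clean exponential $\e^{C_2\lambda^2}$ multiplied by a prefactor in which the Gaussian normalisation $1/\sqrt{4\pi}=1/(2\sqrt\pi)$ is what survives; the $n=0$ term (and, to keep the $\lambda\to0$ behaviour exact, the $n=1$ term) must be carried separately, and the regime $C_1\le 2\sqrt\pi/\e$, where the maximum equals $1$, is dealt with directly. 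A parallel approach giving the same factor $1/(2\sqrt\pi)$ passes through the square-exponential moment: for a standard Gaussian $G$, $\E(\e^{bZ^2})=\E_G\bigl[\E(\e^{\sqrt{2b}\,GZ})\bigr]\le C_1\,\E_G(\e^{2bC_2G^2})=C_1(1-4bC_2)^{-1/2}$ when $b<1/(4C_2)$, and one then recombines this with $\E(\e^{\lambda Z})=\E(\cosh\lambda Z)$ and $\cosh u\le\e^{u^2/2}$.
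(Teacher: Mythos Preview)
Your strategy --- extract even-moment bounds from the exponential hypothesis and feed them back into the Taylor series --- is exactly the paper's. The difference is in how the moments are extracted, and that difference is precisely what creates the ``real obstacle'' you anticipate. You keep one Taylor term, $\frac{\mu^{2n}}{(2n)!}\E(Z^{2n})\le\E(\e^{\mu Z})$, giving $\E(Z^{2n})\le C_1(2n)!(\e C_2/n)^n$ after optimising $\mu$. The paper instead writes $Z^{2q}=(Z^{2q}\e^{-\lambda Z})\e^{\lambda Z}$ and bounds the first factor pointwise by $\sup_{x}x^{2q}\e^{-\lambda x}=(2q/\lambda)^{2q}\e^{-2q}$, which after optimising $\lambda$ gives $\E(Z^{2q})\le C_1\,4^q q^q\e^{-q}C_2^q$ --- sharper than yours by the Stirling factor $(2q)!\,\e^{2q}/(2q)^{2q}\sim\sqrt{4\pi q}$. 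With this bound in hand, a single termwise Stirling comparison (using $\sqrt{2\pi}\,n^{n+1/2}\e^{-n}\le n!\le \e\,n^{n+1/2}\e^{-n}$) yields
\[
\frac{C_1\,4^q q^q\e^{-q}C_2^q}{(2q)!}\le\frac{C_1\e}{2\sqrt\pi}\cdot\frac{C_2^q}{q!}\le\frac{\bigl(1\vee\tfrac{C_1\e}{2\sqrt\pi}\bigr)^q C_2^q}{q!},
\]
and summing over $q\ge0$ finishes the proof in one line. No Laplace method, no low-order terms to carry separately.

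Your weaker moment bound cannot reach the stated constant by any resummation: already at $n=1$ it gives $\E(Z^2)\le 2\e C_1C_2$, so the linear-in-$\lambda^2$ coefficient of your series is $C_1\e\,C_2\lambda^2$, which forces $\kappa\ge C_1\e$ rather than $C_1\e/(2\sqrt\pi)$. The Laplace/AM-GM estimate you sketch on $\sum_n\sqrt n\,v^n/n!$ controls the tail but cannot repair this $n=1$ deficit, and the $\E(\e^{bZ^2})$ route still leaves the prefactor $C_1$ standing at $\lambda=0$. The remedy is not a more elaborate resummation but the sharper pointwise moment extraction; once you use it, the obstacle evaporates and the resummation is the one-line comparison above.
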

\begin{proof}
Let $q\in\N$. Then for any $\theta>0$
\begin{align*}
\E(Z^{2q})
&= \E\big(Z^{2q} \e^{-\theta Z}\e^{\theta Z}\big)\\
& =\E\big((Z^{2q} \e^{\theta Z})\e^{-\theta Z}\1_{\{Z<0\}}\big)+\E\big((Z^{2q} \e^{-\theta Z})\e^{\theta Z}\1_{\{Z\geq 0\}}\big)\\
& \leq 2C_1 \,(2q)^{2q} \theta^{-2q} \e^{-2q} \e^{C_2 \theta^2}
\end{align*}
where the  inequality follows by maximizing $x^{2q} \e^{-\theta x}$ over $x< 0$ in the first term and over $x\geq 0$ in the second one, and then \eqref{dimanche}.
Now we can minimize the bound over $\theta>0$ to get
\begin{equation}\label{pomme-de-nuit}
\E(Z^{2q})\leq 2C_1\, 4^q q^q \e^{-q} C_2^q.
\end{equation}
Using the bounds
\[
\sqrt{2\pi}\, n^{n+\frac{1}{2}} \e^{-n} \leq n! \leq \e n^{n+\frac{1}{2}} \e^{-n}, \;n\geq 1
\]
we get
\[
\frac{2C_1\, 4^q q^q \e^{-q} C_2^q}{(2q)!}=2C_1\, 4^q q^q \e^{-q} C_2^q\,  \frac{q!}{(2q)!}\times \frac{1}{q!}\leq
\frac{\left(\frac{C_1 \e}{\sqrt{\pi}}\right)^q C_2^q}{q!},\; q\in\N\,.
\]
Therefore using the assumption that $Z$ has all its odd moments equal to $0$, \eqref{pomme-de-nuit}, and the previous bound we have
\[
\E\left(\e^{\lambda Z}\right)
=1+\sum_{q=1}^{+\infty} \frac{\E(Z^{2q})\lambda^{2q}}{(2q)!}\leq 1+\sum_{q=1}^{+\infty} \frac{\left(\frac{C_1 C_2 \e \lambda^{2}}{\sqrt{\pi} }\right)^q}{q!}
=\e^{\frac{C_1C_2\textup{\scriptsize e}}{\sqrt{\pi}} \lambda^2}.
\]
The proof is finished.
\end{proof}

\smallskip

We now apply Lemma \ref{lem:dimanche} to the random variable $Z=f(X)-f(Y)$, where $(X,Y)$ is
distributed according to the product probability measure $\dd\mu(x)\dd\mu(y)$.
It is easy to verify that all odd moments vanish ($Z$ is antisymmetric with respect to
the exchange of $X$ and $Y$) and the bound on the exponential moments
follow by replacing $f$ by $\lambda f$ in \eqref{gcb2wrongconstantinfront}.
We use the constants $C_1=b^2$ and $C_2=\frac{\lip(f)^2}{2a}$. 
Finally, the second statement of Theorem \ref{thmgaussianexpmoment} follows from \eqref{pouic}.

\section{A general approximation lemma}\label{appendiceB}

In this appendix, $(\Omega,\|\!\cdot\!\|)$ is a separable Banach space equipped with a sigma-algebra of Borel sets.
We denote by $\Lip(\Omega,\R)$ the space of real-valued Lipschitz functions on $(\Omega,\|\cdot\|)$,
by $\Lip_s(\Omega,\R)$ the space of real-valued Lipschitz functions with bounded support, and by
$\Lip_b(\Omega,\R)$ the space of real-valued bounded Lipschitz functions.
We denote by $\caC^\infty(\Omega,\R)$ the space of real-valued infinitely differentiable
functions, and by $\caC^\infty_s(\Omega,\R)$ the space of real-valued infinitely differentiable functions with bounded support.

Let $\caC$ be a class of real-valued functions on $\Omega$. We say that $\mu$ satisfies $\gcbl{\caC;D}$ if
there exists $D>0$ such that
\[
\log\mu\left( \e^{f-\mu(f)}\right)\leq D\lip(f)^2
\]
for all $f\in\caC$.

\begin{lemma}\label{un-lemme-cool}
Let $\mu$ be a probability measure on $\Omega$. Then
\begin{enumerate}
\item
If $\mu$ satisfies $\gcbl{\caC^\infty_s(\Omega,\R);D}$, then it satisfies $\gcbl{\Lip_s(\Omega,\R);D}$.
\item
If $\mu$ satisfies $\gcbl{\Lip_s(\Omega,\R);D}$, then it satisfies $\gcbl{\Lip(\Omega,\R);D}$.
\end{enumerate}
\end{lemma}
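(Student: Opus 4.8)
The plan is, in both parts, to approximate a function $f$ from the larger class by a sequence $(f_n)$ in the smaller class with $\lip(f_n)\le\lip(f)$ and $f_n\to f$ pointwise, apply the assumed bound to each $f_n$, and let $n\to\infty$. Rewriting $\log\mu(\e^{f_n-\mu(f_n)})\le D\lip(f_n)^2$ as $\mu(\e^{f_n})\le\e^{\mu(f_n)}\e^{D\lip(f_n)^2}$, the limit passage only requires dominated convergence for $\mu(f_n)$ and for $\mu(\e^{f_n})$; the nontrivial ingredients are the construction of $f_n$ and the dominating functions. For Part~1 these dominating functions are the constants $\|f\|_\infty$ and $\e^{\|f\|_\infty}$ (an $f\in\Lip_s$ is automatically bounded), so the limit is immediate and the content is entirely the smoothing. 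For Part~2 one must first extract from $\gcbl{\Lip_s(\Omega,\R);D}$ enough integrability of the distance $\|\cdot-x_0\|$, and this is where the real work lies.

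\textbf{Part 1.} Fix $f\in\Lip_s(\Omega,\R)$, supported in $B(x_0,R_0)$, with $L=\lip(f)$. In $\R^d$ I would set $f_n=f*\rho_{1/n}$ for a standard mollifier $\rho$: then $f_n\in\caC^\infty_s(\R^d,\R)$ with support in $B(x_0,R_0+1/n)$, $\|f_n\|_\infty\le\|f\|_\infty$, $\lip(f_n)\le L$ (a convex average of translates of $f$), and $f_n\to f$ uniformly. Applying $\gcbl{\caC^\infty_s(\R^d,\R);D}$ to $f_n$ gives $\mu(\e^{f_n})\le\e^{\mu(f_n)}\e^{DL^2}$, and letting $n\to\infty$ with bounded convergence yields $\mu(\e^{f-\mu(f)})\le\e^{DL^2}$. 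In a general separable Banach space the argument is the same once one disposes of a Lipschitz‑constant‑non‑increasing smoothing of $f$ (e.g.\ projecting onto finitely many fixed functionals and mollifying on that finite‑dimensional factor); this is the only place where the ambient structure is used.

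\textbf{Part 2, moment bound.} I claim $\gcbl{\Lip_s(\Omega,\R);D}$ forces $\int\|x-x_0\|\dd\mu(x)<\infty$ and $\mu(\e^{t\|\cdot-x_0\|})<\infty$ for all $t>0$. Fix $x_0$ and for $R>0$ put $g_R=\min\!\big(\|\cdot-x_0\|,(2R-\|\cdot-x_0\|)^+\big)$: this is nonnegative, $1$‑Lipschitz, supported in $B(x_0,2R)$, hence lies in $\Lip_s(\Omega,\R)$, and $g_R\uparrow\|\cdot-x_0\|$ as $R\to\infty$. Applying the bound to $-\lambda g_R$ and optimizing in $\lambda>0$ gives the Chernoff tail $\mu\big(g_R\le\mu(g_R)-s\big)\le\e^{-s^2/4D}$; choosing $s_0$ with $\e^{-s_0^2/4D}\le\tfrac12$, the set $\{g_R>\mu(g_R)-s_0\}$ has $\mu$‑measure $\ge\tfrac12$ and is contained in $\{\|\cdot-x_0\|>\mu(g_R)-s_0\}$. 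Because $\|\cdot-x_0\|$ is everywhere finite, $\mu(\|\cdot-x_0\|>M)\to0$ as $M\to\infty$, forcing $\sup_R\mu(g_R)=:M_0<\infty$; by monotone convergence $\int\|\cdot-x_0\|\dd\mu=M_0$. Finally applying the bound to $t g_R$ gives $\mu(\e^{t g_R})\le\e^{tM_0+Dt^2}$, which monotone convergence upgrades to $\mu(\e^{t\|\cdot-x_0\|})\le\e^{tM_0+Dt^2}$.

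\textbf{Part 2, truncation.} Let $f\in\Lip(\Omega,\R)$; subtracting the constant $f(x_0)$ (which leaves the GCB inequality unchanged) we may assume $f(x_0)=0$, so $|f|\le L\|\cdot-x_0\|$ with $L=\lip(f)$. For $R>0$ put $a_R=L(2R-\|\cdot-x_0\|)^+$ and $f_R=\max\!\big(-a_R,\min(f,a_R)\big)$, the median of $-a_R,f,a_R$. Then $f_R\in\Lip_s(\Omega,\R)$ (its support lies in $B(x_0,2R)$, where $a_R$ vanishes), $\lip(f_R)\le L$ (minima and maxima of $L$‑Lipschitz functions), $f_R=f$ on $B(x_0,R)$ so $f_R\to f$ pointwise, and $|f_R|\le|f|\le L\|\cdot-x_0\|$, whence $\e^{f_R}\le\e^{L\|\cdot-x_0\|}$. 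By the moment bound both $L\|\cdot-x_0\|$ and $\e^{L\|\cdot-x_0\|}$ are $\mu$‑integrable, so applying $\gcbl{\Lip_s(\Omega,\R);D}$ to $f_R$ and letting $R\to\infty$ under dominated convergence gives $\mu(\e^{f-\mu(f)})\le\e^{DL^2}$, that is $\gcbl{\Lip(\Omega,\R);D}$. The main obstacle throughout is this moment bound: the definition of $\gcbl{\Lip_s(\Omega,\R);D}$ does not even presuppose a finite first moment, and what manufactures it is the elementary interplay between sub‑Gaussian concentration of the truncated distances $g_R$ around their means and the trivial smallness of the tails of $\|\cdot-x_0\|$, which prevents those means from escaping to infinity.
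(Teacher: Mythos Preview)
Your proof is correct. Part~1 is essentially the paper's argument (mollification without increasing the Lipschitz constant, then bounded convergence), with the same level of informality about the infinite-dimensional smoothing. Part~2, however, takes a genuinely different route. The paper factors through the intermediate class $\Lip_b$: first it passes from $\Lip_s$ to $\Lip_b$ by a \emph{multiplicative} cutoff $F_A=f\psi_A$, which temporarily inflates the Lipschitz constant to $\lip(f)+\|f\|_\infty/A$ and only recovers the sharp constant in the limit $A\to\infty$; then from $\Lip_b$ to $\Lip$ it uses the vertical truncation $f_M=(f\wedge M)\vee(-M)$ and controls $\sup_M|\mu(f_M)|$ by a mean-value argument on a ball of positive $\mu$-measure. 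You instead go directly from $\Lip_s$ to $\Lip$ in one step: you first manufacture the exponential moment $\mu(\e^{t\|\cdot-x_0\|})<\infty$ from the sub-Gaussian lower tails of the compactly supported tent functions $g_R$ (the key observation being that $\{g_R>\mu(g_R)-s_0\}\subset\{\|\cdot-x_0\|>\mu(g_R)-s_0\}$ forces $\sup_R\mu(g_R)<\infty$), and then use the median truncation $f_R=\mathrm{med}(-a_R,f,a_R)$, which stays $L$-Lipschitz throughout and is dominated by $L\|\cdot-x_0\|$. Your approach is tidier in that the Lipschitz constant never moves and no intermediate class is needed; the paper's approach, on the other hand, avoids having to establish the exponential moment of the distance up front and works with more elementary truncations.
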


\begin{proof}
Let $\nu$ be a $\caC^\infty$ (in the sense of distributions) probability measure on $\Omega$ with bounded support.
For every $\lambda>0$ we define the rescaled measure $\nu_\lambda$ by
\[
\nu_\lambda(f):=\nu(f_\lambda)
\]
for any $f$ continuous with bounded support, where $f_\lambda(x):=f(\lambda x)$. For $f\in \Lip_s(\Omega,\R)$, we have
$\nu_\lambda * f\in \caC^\infty_s(\Omega,\R)$ and $\lip(\nu_\lambda * f)\leq \lip(f)$. Since $\mu$ is assumed to satisfy
$\gcbl{\caC^\infty_s(\Omega,\R);D}$, it follows that
\[
\mu\left( \e^{\nu_\lambda * f-\mu(\nu_\lambda * f)}\right) \leq \e^{D \lip(f)^2}.
\]
The first statement then follows by dominated convergence.

For the second statement, as an intermediate step, we prove that if $\mu$ satisfies $\gcbl{\Lip_s(\Omega,\R);D}$
then it satisfies $\gcbl{\Lip_b(\Omega,\R);D}$.
Let $\psi:\R_+\to\R_+$ be defined by
\[
\psi(u)=
\begin{cases}
1 & \text{if}\quad u\leq1 \\
2-u & \text{if}\quad 1\leq u\leq 2\\
0 & \text{if}\quad u\geq 2.
\end{cases}
\]
For any $A>0$ define $\psi_A:\Omega\to\R_+$ by
\[
\psi_A(x)=\psi\left(\frac{\|x\|}{A}\right).
\]
We have $\psi_A\in \Lip_s(\Omega,\R)$ and $\lip(\psi_A)\leq 1/A$.
Without loss of generality, we can take $f\in\Lip_b(\Omega,\R)$ such that $f(0)=0$. Then define the function $F_A$ by
\[
F_A(x)=f(x) \psi_A(x).
\]
We show that $F_A\in \Lip_s(\Omega,\R)$. We have
\[
F_A(x)-F_A(y)=f(x)\big( \psi_A(x)-\psi_A(y)\big) + \psi_A(y)\big(f(x)-f(y) \big).
\]
Since $\|\psi_A\|_\infty\leq 1$ we get
\[
\lip(F_A)\leq \frac{\|f\|_\infty}{A}+\lip(f).
\]
Since $\mu$ is assumed to satisfy
$\gcbl{\Lip_s(\Omega,\R);D}$, we have
\[
\mu\left( \e^{F_A-\mu(F_A)}\right) \leq \exp\left(D\left(\frac{\|f\|_\infty}{A}+\lip(f)\right)^2\right).
\]
Using the Dominated Convergence Theorem, we take the limit $A\to+\infty$ and get
\[
\mu\left( \e^{f-\mu(f)}\right) \leq \e^{D\lip(f)^2}\,.
\]
Finally, let us prove that if $\mu$ satisfies $\gcbl{\Lip_b(\Omega,\R);D}$ then it satisfies $\gcbl{\Lip(\Omega,\R);D}$.
Define for $M>0$
\[
f_M(x)=(f(x)\wedge M)\vee (-M)\,.
\]
By observing that $\lip(f_M)\leq \lip(f)$ and since $\mu$ satisfies $\gcbl{\Lip_b(\Omega,\R);D}$ by assumption, we
have
\begin{equation}\label{gilet}
\mu\left( \e^{f_M-\mu(f_M)}\right) \leq \e^{D\lip(f)^2}\,.
\end{equation}
We are going to take the limit $M\to+\infty$ and prove that the left-hand side converges to $\mu\left(\exp(f-\mu(f))\right)$.
We first prove that $\sup_{M>0}|\mu(f_M)|<+\infty$. We start by proving that $\inf_{M>0}\mu(f_M)>-\infty$.
Take a ball $B$ such that $\mu(B)>0$. Denote by $x_B$ its center and by $r_B$ its radius. Assume that for all $x\in B$ we have
\[
\mu(B) \e^{f_M(x)-\mu(f_M)} > \e^{D\lip(f)^2}\,.
\]
Integrating over $B$ with respect to $\mu$ we get
\[
\mu(B) \mu\left(\e^{f_M-\mu(f_M)}\1_B\right) > \e^{D\lip(f)^2}\mu(B)
\]
which contradicts \eqref{gilet}. Hence there exists $B'\subset B$ such that $\mu(B')>0$ and such that for any $x\in B'$ we have
\[
\mu(B) \e^{f_M(x)-\mu(f_M)} \leq \e^{D\lip(f)^2}\,.
\]
Hence, picking an arbitrary $x\in B'$ and using that $\lip(f_M)\leq \lip(f)$, we get
\[
f_M(x_B)\leq \mu(f_M) + D \lip(f)^2 -\log \mu(B) + \lip(f)\, r_B\,.
\]
Since $f_M(0)=0$, we have $f_M(x_B)\geq -\lip(f) \|x_B\|$, which implies $\inf_{M>0}\mu(f_M)>-\infty$.

A similar argument applies to $-f$, therefore
\[
A_f:=\sup_{M>0}|\mu(f_M)|<+\infty\,.
\]
We now prove that $\e^f$ is integrable with respect to $\mu$.
We have
\begin{equation}\label{eq-minerva0}
\mu\left(\e^{f_M}\right)= \mu\left(\1_{\{f\geq 0\}}\e^{f_M}\right)+\mu\left(\1_{\{f<0\}}\e^{f_M}\right).
\end{equation}
If $x\in\Omega$ is such that $f(x)\geq 0$, then $f_M(x)\uparrow f(x)$ as $M\uparrow +\infty$, then
\[
\mu\left(\1_{\{f\geq 0\}}\e^{f_M}\right)\leq \mu\left(\e^{f_M}\right)\leq \e^{ D\lip(f)^2+A_f}.
\]
By the Monotone Convergence Theorem we thus get
\begin{equation}\label{eq-minerva1}
 \mu\left( \1_{\{f\geq 0\}}\e^{f}\right)=\lim_{M\to+\infty} \mu\left(\1_{\{f\geq 0\}}\e^{f_M}\right) \leq \e^{D\lip(f)^2+A_f}.
\end{equation}
Now we deal with the second term in the right-hand side of \eqref{eq-minerva0}. Since the function $\1_{\{f<0\}}\e^{f_M}$
is nonnegative and bounded above by $1$ and converges pointwise to $\1_{\{f<0\}}\e^{f}$ as $M$ tends to $+\infty$,
we apply the Dominated Convergence Theorem to get that
\[
\lim_{M\to+\infty} \mu\left(\1_{\{f<0\}}\e^{f_M}\right)= \mu\left( \1_{\{f<0\}}\e^{f}\right).
\]
Therefore, using this inequality, \eqref{eq-minerva1} and \eqref{eq-minerva0} we conclude that
\begin{equation}\label{momexpf}
\lim_{M\to+\infty} \mu\big(\e^{f_M}\big)=\mu\big(\e^{f}\big)<+\infty.
\end{equation}
By a similar argument one shows that $\mu\left(\e^{-f}\right)<+\infty$.

We now prove that $\mu(f_M)$ converges to $\mu(f)$ as $M$ tends to $+\infty$. We observe that
$|f_M|\leq \e^f + \e^{-f}$. Hence by the Dominated Convergence Theorem we conclude that
\begin{equation}\label{eq-minerva2}
\lim_{M\to+\infty}\mu(f_M) =\mu(f).
\end{equation}
Using \eqref{eq-minerva2} and \eqref{momexpf}, we can take the limit $M\to+\infty$ in
inequality \eqref{gilet} and obtain
\[
\mu\left( \e^{f-\mu(f)}\right) \leq \e^{D\lip(f)^2}.
\]
The lemma is proved.
\end{proof}

\section*{Acknowledgment}

F. Redig thanks Ecole Polytechnique and CNRS for financial support and hospitality. We thank R. Kraaij for useful discussions in an early stage of this work.


\end{document}